\newtheorem{thm}{Theorem}[section]
\newtheorem{prop}[thm]{Proposition}
\newtheorem{lem}[thm]{Lemma}
\theoremstyle{definition}
\newtheorem{example}[thm]{Example}
\numberwithin{equation}{section}
\newcommand{\Cay}{\operatorname{Cay}}
\newcommand{\Mn}{\mathcal{M}_n}
\begin{document}

\title[Cancellative, singly aligned, non-embeddable monoids]{A collection of cancellative, singly aligned, non-embeddable monoids}

\author{Milo Edwardes}
\address{M. Edwardes, Department of Mathematics, University of Manchester, M13 9PL}
\email{milo.edwardes@manchester.ac.uk}

\author{Daniel Heath}
\address{D. Heath, Department of Mathematics, University of Manchester, M13 9PL}
\email{daniel.heath-2@manchester.ac.uk}

\begin{abstract}

By classical results of Malcev, cancellative monoids need not be group-embeddable. In this paper, we describe, give presentations for and study an infinite family $\mathcal{M}_n$ of cancellative monoids which are not group-embeddable, originating from Malcev's work. We show that $\mathcal{M}_n$ is singly aligned for $n \geq 2$, owing to applications in the study of $\mathrm{C}^*$-algebras by Brix, Bruce and Dor-On. We finish by showing that $\mathcal{M}_1$ is not singly aligned, but $2$-aligned.

\end{abstract}

\maketitle

\section{Introduction}

The study of semigroups in their own right emerged in the mid-to-late 1930s motivated by advances in the study of groups and rings \cite{hollings:embedding}. Much of the early work was devoted to the \textit{embedding problem}: given a semigroup $S$, does there exist a group $G$ into which $S$ embeds? More generally: given a category $\mathfrak{A}$, does there exist a faithful functor from $\mathfrak{A}$ to a groupoid $\mathfrak{G}$?

Let $S$ be a semigroup [resp. monoid]. We say $S$ is \textit{group-embeddable} if there exists a group $G$ and a semigroup [resp. monoid] morphism $\phi : S \to G$, such that $\phi$ is injective. We call any $\phi$ with such properties a \textit{semigroup embedding} [resp. \textit{monoid embedding}], and say that $\phi$ \textit{embeds} $S$ in $G$. Otherwise, we say $S$ is \textit{not group-embeddable}, or simply \textit{non-embeddable}.

Many conditions for embeddability would be found over the following decades \cite{bush:MandL,bush:MandL2,clifford:vol2,hollings:embedding,ptak:immerse}, and the embedding problem for categories has also been studied \cite{johnstone:cats}. In both settings, the respective notions of \textit{cancellativity} is seen to be necessary. However, in 1937, Malcev gave an example of a non-embeddable cancellative semigroup, further claiming \textit{``We also have found the necessary and sufficient conditions for the possibility of immersion''} \cite{malcev:original} -- such conditions would appear later \cite{malcev:inf1, malcev:inf2}. 

In more recent developments, the study of $C^*$-algebras has been closely linked to that of semigroups, particularly \textit{cancellative} and \textit{inverse} semigroups \cite{exel:inv,paterson:groupoids}. In \cite[Question B]{brix:coactions}, the following question is posed: given a left cancellative small category $\mathfrak{C}$, is $\mathcal{A}_\lambda(\mathfrak{C})$ canonically $*$-isomorphic to $\partial C^*_\lambda(\mathfrak{C})$? Whilst we do not detail the nuances here, it is noted that the question is resolved positively for groupoid-embeddable categories, and for cancellative \textit{singly aligned} monoids \cite[Theorem 4.17 and Theorem 5.4]{brix:coactions} (therein called \textit{right LCM} monoids). In particular, these monoids need not be group-embeddable, leading to a follow-up question posed by Chris Bruce to the authors of this paper: what are some examples of cancellative, singly aligned monoids which are non-embeddable? Such monoids would truly demonstrate the generalization of the result to categories which are not embeddable into groupoids.

Here, we study an infinite class of monoids $\mathcal{M}_n$ resulting from Malcev's original embeddability conditions to give further such examples. These have presentations naturally arising from a collection of \textit{Malcev Sequences} $\mathcal{I}_n$, with their semigroup counterparts $\mathcal{S}_n$ previously studied in \cite{clifford:vol2,malcev:inf2}.

\textit{Interval monoids} (and ways of constructing them) have been examined by Dehornoy and Wehrung \cite{dehornoy:mr3} -- it is seen that some interval monoids are cancellative and singly aligned, and can be modified slightly to create non-embeddable examples. In particular, a monoid $M_B$ with 24 generators and 11 relations is introduced \cite[Proposition 4.3]{dehornoy:mr3}. Whilst $M_B$ has the same number of generators and relations as what we subsequently define to be $\mathcal{M}_5$, it is not isomorphic. We construct our $\mathcal{M}_n$ monoids via other means -- it remains to be seen if they may be constructed via interval methods in e.g. \cite{dehornoy:mr3, wehrung:gcd}. 

We note that singly aligned monoids encompass the class of \textit{right rigid} monoids \cite{cohn:free}. Results of Doss \cite{doss:immersion} (see also \cite[Theorem 8.12]{cohn:free}) show that cancellative right rigid monoids are always group-embeddable -- consequently the monoids $\mathcal{M}_n$ we introduce are not right rigid.

This paper consists of 4 further sections: firstly, in Section~\ref{sec:prelim}, we provide preliminary definitions regarding notation, \textit{Cayley graphs} and cancellativity. In Section~\ref{sec:mn}, we introduce and study properties of the monoids $\Mn$. In Section~\ref{sec:rLCM}, we show these monoids are singly aligned (for $n \geq 2$), and finally in Section~\ref{sec:2ali}, we discuss the case where $n=1$, showing that whilst $\mathcal{M}_1$ is not singly aligned, it is $2$-aligned.
 
\section{Preliminaries}\label{sec:prelim}

We assume the reader is familiar with general semigroup and monoid theory, particularly ideals, generators, relators and presentations. For comprehensive introductions, we direct the reader to \cite{clifford:vol1,clifford:vol2,howie:fundamentals}. Throughout, we will denote semigroup presentations by $\mathbf{{Sgp}}\langle\ \cdot\ |\ \cdot\ \rangle$ and monoid presentations by $\mathbf{Mon}\langle\ \cdot\ |\ \cdot\ \rangle$. We write $\mathbb{N}$ for the set of positive integers, and $\mathbb{N}_0$ for the set of non-negative integers.

Let $X$ be a non-empty set. We call $X$ an \textit{alphabet} and the elements of $X$ \textit{letters}. A \textit{word} is a (possibly empty) string of letters, i.e. an element of the \textit{free monoid} $X^*$. The number of letters of a word $w$ is its \textit{length}, denoted $|w|$. 

Let $M$ be a semigroup (or monoid) defined by presentation $\langle X\ |\ R \rangle$. We treat a relation in $R$ as an element of $X^* \times X^*$, written $(w_1,w_2)$, or sometimes $w_1 = w_2$. We say two words $w$ and $w'$ in $X^*$ are \textit{equal in $M$}, written $w =_M w'$ or simply $w = w'$, if $w$ and $w'$ represent the same element of $M$ under the natural morphism $X^* \to M$. Equivalently, $w = w'$ if there exist finitely-many applications of the relations in $R$ to the word $w$, called \textit{$R$-transitions}, which transforms the word $w$ into $w'$. To avoid confusion, we instead write $w \equiv w'$ if $w$ and $w'$ are the exact same word in $X^*$. Note that $w \equiv w'$ implies $w = w'$, but not necessarily vice versa.

We denote an $x$-labelled edge of an edge-labelled directed graph from a \textit{source} vertex $u$ to a \textit{target} vertex $v$ by the diagram $u \xrightarrow{x} v$, or simply by $u \rightarrow v$ if the label is irrelevant. The \textit{in-degree} [resp. \textit{out-degree}] of a vertex $v$ is the number of edges with target [resp. source] $v$ (if it is finite). We say a vertex $v$ is \textit{reachable} from $u$ if there exists a (possibly empty) directed sequence of edges from $u$ to $v$. The (\textit{generalised, right}) \textit{Cayley graph} of $M$ with respect to the generating set $X$, denoted $\Cay(M;X)$, is a directed graph, edge-labelled by $X$, with vertex set $V(\Cay(M;X)) := M$, and edge set \[E(\Cay(M;X)) := \left\{p \xrightarrow{x} q \ |\ p,q \in M,\ x \in X \textrm{ such that } px = q\right\}.\]

We say a semigroup (or monoid) $M$ is \textit{left cancellative} if $ca = cb \textrm{ implies } a = b$ for all $a,b,c \in M$. Dually, $M$ is \textit{right cancellative} if $ac = bc \textrm{ implies } a = b$. We say $M$ is \textit{cancellative} if it is both left cancellative and right cancellative. A characterisation of Cayley graphs of cancellative monoids is discussed in \cite{caucal:cayley} -- we will use one important property (Fact 6.1).

\begin{prop}[Caucal \cite{caucal:cayley}]\label{prop:codet}
    Cayley graphs of cancellative monoids are co-deterministic, that is if there exist two edges $u_1 \xrightarrow{x} v$ and $u_2 \xrightarrow{x} v$, with identical labels and targets, then the edges coincide.
\end{prop}

It is quickly seen that if a semigroup (or monoid) is group-embeddable then it must be cancellative. The following result of Malcev \cite{malcev:original} shows cancellativity is insufficient.

\begin{thm}[Malcev \cite{malcev:original}]\label{thm:malcevog}
    Let $\mathcal{S}_1$ be the semigroup defined by presentation \[\mathcal{S}_1 = \mathbf{Sgp}\left\langle a,b,c,d,u,v,x,y\ |\ ax = by,\ au = bv,\  cx = dy \right\rangle.\]Then $\mathcal{S}_1$ is a cancellative semigroup which is not group-embeddable.
\end{thm}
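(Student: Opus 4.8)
The plan is to prove both assertions—cancellativity and non-embeddability—through a single complete rewriting system. I would orient the three defining relations as $by \to ax$, $bv \to au$ and $dy \to cx$, so that every rule rewrites a letter of $\{b,d\}$ followed by a letter of $\{y,v\}$ into a letter of $\{a,c\}$ followed by a letter of $\{x,u\}$. Termination follows from the shortlex order induced by any ordering of the alphabet with $a<b$ and $c<d$, since each rule preserves length and strictly lowers the leading letter. For confluence I would check critical pairs: the second letters of the left-hand sides lie in $\{y,v\}$ while their first letters lie in $\{b,d\}$, and these sets are disjoint, so no left-hand side overlaps another and there are no critical pairs. Hence the system is complete, and every element of $\mathcal{S}_1$ has a unique normal form, namely the unique word in its class avoiding the factors $by$, $bv$, $dy$.

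For \textbf{non-embeddability}, I would first record the ``phantom relation'': in any group, $ax=by$ and $au=bv$ give $xy^{-1}=a^{-1}b=uv^{-1}$, while $cx=dy$ gives $c^{-1}d=xy^{-1}$; combining yields $c^{-1}d=uv^{-1}$, i.e.\ $cu=dv$. Thus any semigroup morphism $\phi\colon \mathcal{S}_1 \to G$ into a group necessarily satisfies $\phi(cu)=\phi(dv)$. On the other hand, both $cu$ and $dv$ are already normal forms (neither contains a forbidden factor) and they are distinct words, so $cu \ne dv$ in $\mathcal{S}_1$. Therefore $\phi$ identifies two distinct elements and cannot be injective, so no group embedding exists.

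For \textbf{cancellativity}, the key structural observation is that the right-hand-side letters interact harmlessly with their neighbours: the first letters $a,c$ of the right-hand sides never occur as second letters of a left-hand side, and the second letters $x,u$ never occur as first letters of a left-hand side. Consequently, appending (or prepending) a single generator $g$ to a normal form $N$ can trigger at most one rewrite, occurring at the junction, and the result is immediately normal---no cascade of further reductions is possible. I would then verify, by a short case analysis on $g$, that the resulting map ``append $g$, then normalise'' is injective on normal forms, and likewise for prepending; for instance, for $g=y$ a normal form ending in $b$ (resp.\ $d$) maps to one ending in $ax$ (resp.\ $cx$), and otherwise one simply appends $y$, and these images are mutually distinguishable and individually reversible. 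Injectivity of these maps is exactly right (resp.\ left) cancellation by each generator, and cancellation by an arbitrary element then follows by peeling off generators one at a time.

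The conceptual heart is the phantom-relation computation, which is short; I expect the main \emph{technical} obstacle to be the cancellativity bookkeeping---establishing rigorously that no reduction cascade occurs and that each append/prepend map is injective across all eight generators in both directions. The disjointness of the letter sets $\{a,c\}$, $\{x,u\}$, $\{b,d\}$, $\{y,v\}$ is what keeps this case analysis finite and uniform, and I would lean on it to compress the casework rather than treat each generator separately.
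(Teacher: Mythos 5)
Your proposal is correct. The paper itself only cites Malcev for this theorem, but its own direct proofs (given for the generalised monoids $\mathcal{M}_n$, of which the monoid version of $\mathcal{S}_1$ is the case $n=1$ up to relabelling) run as follows: non-embeddability is proved exactly by your ``phantom relation'' computation --- a morphism $\phi$ into a group is shown to force $\phi(ca)=\phi(B_1C_1)$, the analogue of your $cu=dv$, while no relation applies to the word $ca$ --- so on that half you and the paper coincide. For cancellativity your route genuinely differs: the paper argues directly on a minimal-length sequence of relation-transitions from $ac$ to $bc$, using the disjointness of the sets of ``left-position'' and ``right-position'' letters ($P_n$ and $Q_n$) to show that any step touching the last letter must later be undone and can be excised, contradicting minimality, so the sequence induces one from $a$ to $b$; you instead orient the relations into a length-preserving rewriting system, note that no left-hand side overlaps another (hence no critical pairs, hence completeness and unique normal forms), and deduce right and left cancellation from injectivity of the ``append/prepend a generator, then normalise'' maps, each of which triggers at most one rewrite at the junction. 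Both arguments rest on the same combinatorial fact --- the disjointness of the letter classes $\{b,d\},\{y,v\}$ versus $\{a,c\},\{x,u\}$, i.e.\ the paper's $P_n/Q_n$ and $L_n/R_n$ dichotomies --- and both generalise to all $\mathcal{S}_n$, $\mathcal{M}_n$. Your approach buys a normal form and an explicit solution of the word problem as a by-product (your normal forms are, through the relabelling, exactly the paper's later ``left normal form''), at the cost of the generator-by-generator injectivity casework, which you only sketch; that casework does close uniformly, precisely because right-hand-side letters can never create a new redex with their neighbours. The paper's minimality argument is shorter and avoids termination and confluence considerations altogether.
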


The corresponding monoid \[\mathcal{M}_1 = \mathbf{Mon}\left\langle a,b,c,d,u,v,x,y\ |\ ax = by,\ au = bv,\  cx = dy \right\rangle\]is also be seen to be cancellative but not group-embeddable (see Section~\ref{sec:mn}). This monoid will be of particular interest to us in Section~\ref{sec:2ali}.

The final property of monoids we shall study is that of being \textit{finitely aligned}. We say that a (left cancellative) monoid $M$ is finitely aligned for all $p,q \in M$, either $pM \cap qM = \varnothing$, or \[pM \cap qM = \{r_1,\dots,r_k\}M\]for some finite number of elements $r_i \in M$, i.e. if the intersection of any two principal right ideals is finitely generated. If the number of generating elements for any intersection is globally bounded by some $k$, we say that $M$ is \textit{$k$-aligned}. We will be interested mostly when $k=1$ -- in keeping with recent literature, we use the term \textit{singly aligned} in place of $1$-aligned \cite{lawson:general}, though some texts analogously refer to this as $M$ satisfying \textit{Clifford's condition} \cite{lawson:noncomm,nekrashevych:self} or being \textit{right LCM} \cite{brix:coactions}. Singly aligned is a stronger condition to that of \textit{right ideal Howson} and \textit{finitely aligned} \cite{carson:howson} -- we will discuss the latter in Section~\ref{sec:2ali}.

\section{The Monoids \texorpdfstring{$\mathcal{M}_n$}{Mn}}\label{sec:mn}
In Section~\ref{sec:prelim}, we introduced the semigroup $\mathcal{S}_{1}$ and its associated monoid $\mathcal{M}_{1}$, neither of which are group-embeddable \cite{malcev:original}. In this section, we generalise this definition to describe a countable family of monoids $\mathcal{M}_{n}$, such that for each $n\in\mathbb{N}$, $\mathcal{M}_{n}$ is not cancellative and not group-embeddable. We will define $\mathcal{M}_{n}$ in terms of its presentation.

For any $n \in \mathbb{N}$, define $\mathcal{S}_{n}$ be the semigroup $\mathbf{Sgp}\langle X_n\ |\ \rho_n \rangle$ where:
    $$X_{n}:=\{ a, b, c, d, A_{1}, \dots , A_{n}, B_{1}, \dots , B_{n}, C_{1}, \dots ,C_{n}, D_{1}, \dots, D_{n}\},$$
      \[
    \rho_{n}:=\begin{cases}
        (da,A_{1}C_{1})\\
        (A_{1}D_{1},A_{2}C_{2})\\
        \;\;\;\;\;\;\;\;\vdots \\
        (A_{n-1}D_{n-1},A_{n}C_{n})\\
        (A_{n}D_{n},db)\\
        (cb,B_{n}D_{n})\\
        (B_{n}C_{n},B_{n-1}D_{n-1})\\
        \;\;\;\;\;\;\;\;\vdots\\
        (B_{3}C_{3},B_{2}D_{2})\\
        (B_{2}C_{2},B_{1}D_{1}).
    \end{cases}
    \]We define $\mathcal{M}_{n}$ be the monoid defined by presentation $\mathcal{M}_{n}:= \mathbf{Mon}\langle X_{n}\ |\ \rho_{n}\rangle.$ Note in particular that every relation in $\rho_n$ is of the form $(w_1,w_2)$ where $|w_1| = |w_2| = 2$. 
    
    The semigroups $\mathcal{S}_n$ are of critical importance to the study of non-embeddability. They are cancellative yet not group-embeddable, and occur from a natural construction via a specific collection of \textit{Malcev sequences} $\mathcal{I}_n$ \cite{clifford:vol2}. Whilst we do not refer to the details of their construction here, we note that they have the following astonishing property: given any finite set of equational implications (or \textit{quasi-identities}) holding in all groups, then such implications hold in $\mathcal{S}_n$ for sufficiently large $n$. Full details of this may be found in \cite{clifford:vol2, malcev:inf2}.
    
    We now turn our attention to the monoids $\Mn$. By taking $n = 1$, we obtain exactly the monoid analogue of Malcev's non-embeddable semigroup from Theorem \ref{thm:malcevog} (up to relabelling of generators). Akin to $\mathcal{S}_n$, we now show that each $\Mn$ is cancellative but not group-embeddable. Whilst these follow from Malcev's work, we provide direct proofs. 
    
    To show cancellativity, we observe properties of the relation set $\rho_{n}$. Firstly, we define:
    \begin{align*}
    P_{n}&=\{x\in X_{n}\;|\; x\text{ appears as the left letter of a word in some relation in }\rho_{n}\} \\
    &=\{c,d, A_{1},\dots, A_{n}, B_{1}, \dots, B_{n}\}.\\
    Q_{n}&=\{x\in X_{n}\;|\; x\text{ appears as the right letter of a word in some relation in }\rho_{n}\} \\
    &=\{a,b,C_{1},\dots,C_{n},D_{1},\dots,D_{n}\}.
    \end{align*}Note that $P_n \cap Q_n = \varnothing$. We note some more facts regarding $\rho_n$, $P_n$ and $Q_n$ -- the following are routine to show by considering the effect of $R$-transitions on words. We subsequently use the result without reference.

    \begin{lem}
    Let $n \in \mathbb{N}$ and let $w, w' \in X_{n}^*$ have $w' =_{\Mn} w$. Then:
    \begin{enumerate}
        \item $|w| = |w'|$;
        \item The $i^{\textrm{th}}$ letter of $w$ is in $P_n$ if and only if the $i^{\textrm{th}}$ letter of $w'$ is in $P_n$;
        \item The $i^{\textrm{th}}$ letter of $w$ is in $Q_n$ if and only if the $i^{\textrm{th}}$ letter of $w'$ is in $Q_n$.
    \end{enumerate}
    \end{lem}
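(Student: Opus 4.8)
The plan is to reduce the claim to the behaviour of a single $R$-transition and then argue by induction on the number of transitions needed to pass from $w$ to $w'$. Since $w =_{\Mn} w'$ means precisely that there is a finite chain $w \equiv w_0, w_1, \dots, w_m \equiv w'$ in which each $w_{j+1}$ is obtained from $w_j$ by one application of a relation in $\rho_n$ (in either direction), it suffices to establish each of (1)--(3) for a single transition; the general case then follows because all three properties are transitive and, being biconditionals, symmetric, hence preserved under composition of transitions. The base case $m = 0$, i.e.\ $w \equiv w'$, is trivial.

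For the inductive step I would exploit the explicit shape of $\rho_n$. Every relation $(w_1, w_2) \in \rho_n$ satisfies $|w_1| = |w_2| = 2$, as already noted. A single $R$-transition therefore replaces a length-two factor occurring at some pair of consecutive positions $i, i+1$ by another length-two factor, leaving all other positions untouched. This immediately gives (1), since the total length is unchanged; crucially, because the replacement is length-preserving, no position is shifted: the letter at each position $k \notin \{i, i+1\}$ is literally unchanged, and the letters at positions $i$ and $i+1$ are replaced in place.

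The only part requiring a genuine (if short) check is that each relation respects the $P_n$/$Q_n$ classification position-by-position. I would verify directly from the list defining $\rho_n$ that in every relation $(w_1, w_2)$ the left letters of $w_1$ and of $w_2$ both lie in $P_n$, while the right letters of $w_1$ and of $w_2$ both lie in $Q_n$. For instance $(da, A_1C_1)$ has left letters $d, A_1 \in P_n$ and right letters $a, C_1 \in Q_n$, and the same pattern holds for the generic relations $(A_iD_i, A_{i+1}C_{i+1})$, $(A_nD_n, db)$, $(cb, B_nD_n)$, and $(B_{i}C_{i}, B_{i-1}D_{i-1})$. Consequently, when a transition acts at positions $i, i+1$, position $i$ stays in $P_n$ and position $i+1$ stays in $Q_n$, so the $P_n$-membership and $Q_n$-membership of every position are invariant. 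Recalling that $P_n \cap Q_n = \varnothing$ (and in fact $P_n \cup Q_n = X_n$, so the two sets partition $X_n$), this yields both (2) and (3).

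The main obstacle is bookkeeping rather than anything conceptual: one must check that an $R$-transition acts on a contiguous length-two factor and, because the relations are length-preserving, does not shuffle the positions of the surrounding letters, so that ``the $i^{\textrm{th}}$ letter'' remains meaningful throughout the chain. Once the uniform ``$P_n$ on the left, $Q_n$ on the right'' structure of $\rho_n$ is observed, the conclusion is immediate.
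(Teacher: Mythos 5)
Your proof is correct and follows exactly the route the paper has in mind: the paper omits the argument as ``routine to show by considering the effect of $R$-transitions on words,'' and your induction on the number of transitions, together with the observation that every relation in $\rho_n$ is length two with both left letters in $P_n$ and both right letters in $Q_n$ (and $P_n \cap Q_n = \varnothing$), is precisely that routine verification spelled out.
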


    \begin{prop}
    For any $n\in\mathbb{N}$, $\Mn$ is cancellative.
    \end{prop}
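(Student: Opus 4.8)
The plan is to prove left and right cancellativity separately, and for each to reduce first to cancellation by a single generator: a monoid is left cancellative precisely when $xs =_{\Mn} xt$ implies $s =_{\Mn} t$ for every generator $x \in X_n$ and all words $s,t$, since a general prefix $w \equiv x_1\cdots x_k$ can be cancelled one letter at a time. Throughout I will use the basic observation that an $R$-transition acting on positions $(i,i+1)$ of a word requires the $i^{\text{th}}$ letter to lie in $P_n$ and the $(i+1)^{\text{th}}$ letter to lie in $Q_n$, since both sides of every relation in $\rho_n$ are words of the form (letter of $P_n$)(letter of $Q_n$).

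I would first dispatch the cases in which the cancelled letter can never be disturbed. For left cancellation by $x$, suppose we are given a derivation $xu =_{\Mn} xv$. If $x \in Q_n$, then no $R$-transition can ever act on positions $(1,2)$ (this would need the first letter in $P_n$), so position $1$ is frozen and the whole derivation takes place in positions $\geq 2$, yielding $u =_{\Mn} v$ at once; the same conclusion holds if $x \in P_n$ but $u$ begins with a letter of $P_n$, since then position $2$ carries a $P_n$-letter throughout (by the preceding Lemma) and again no transition acts on $(1,2)$. The dual argument, using that a transition on the final two positions needs a $Q_n$-letter in the last place, handles right cancellation by any $x \in P_n$ and by any $x \in Q_n$ for which $u$ ends in a $Q_n$-letter.

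The remaining, genuine case for left cancellation is $x \in P_n$ with $u$ beginning with a $Q_n$-letter $q$; write $u \equiv q r$. The decisive point is that the preceding Lemma forces position $2$ to carry a $Q_n$-letter at every stage, so no $R$-transition can ever act on positions $(2,3)$. Consequently every transition in the derivation acts either on the leading pair (positions $1,2$) or strictly to the right of position $2$, and transitions of these two kinds have disjoint support. Projecting the derivation onto its first two letters and onto positions $\geq 3$ therefore yields two independent derivations $xq =_{\Mn} xq'$ and $r =_{\Mn} r'$, where $v \equiv q' r'$. To finish I would use two structural features of $\rho_n$ that are immediate from the list of relations: every $P_nQ_n$-pair occurs as a side of at most one relation, and the two sides of each relation have distinct first letters (and, dually, distinct second letters). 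The first makes the graph on $P_nQ_n$-pairs (with an edge for each relation) a matching, so the derivation $xq =_{\Mn} xq'$ is a walk inside a component with at most two vertices; the second guarantees that the only vertex in such a component with first letter $x$ is the starting pair, so $q \equiv q'$. Combining this with $r =_{\Mn} r'$ gives $u =_{\Mn} v$. Right cancellation is entirely dual, using distinct second letters in place of distinct first letters.

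The main obstacle is the decoupling step in the last paragraph: recognising that, in the only non-trivial case, the shared letter in position $2$ is permanently a $Q_n$-letter and hence blocks every transition on positions $(2,3)$, which is exactly what lets the leading pair evolve independently of the tail. Once this is seen, the two structural facts about $\rho_n$ are routine to verify from the presentation, and the projection and matching arguments are bookkeeping. Equivalently, one could note that because $P_n \cap Q_n = \varnothing$ no two length-two left-hand sides overlap, so any shortlex orientation of $\rho_n$ is a complete rewriting system; but the direct derivation-projection argument above avoids invoking normal forms and yields cancellativity more directly.
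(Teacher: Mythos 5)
Your proof is correct, and while it follows the same overall skeleton as the paper (reduce to cancelling a single generator, then analyse a sequence of $\rho_n$-transitions using the $P_n$/$Q_n$ partition of letters), the key step is closed by a genuinely different mechanism. The paper takes a \emph{minimal-length} sequence of transitions from $ac$ to $bc$ and argues that if any transition touched the final pair $a_kc$, the unique reverse transition would later have to be applied to restore the last letter, so deleting both steps contradicts minimality; hence a minimal derivation never disturbs the cancelled letter and restricts to a derivation from $a$ to $b$ (left cancellation being dual). You instead allow an arbitrary derivation to disturb the pair adjacent to the cancelled letter: in the one non-trivial case ($x\in P_n$ with the next letter in $Q_n$) you note that position $2$ stays in $Q_n$, so transitions at positions $(2,3)$ are blocked, the derivation decouples into an evolution of the leading two-letter block and an independent derivation on the tail, and the matching structure of the relation pairs together with the distinct-first-letter property forces the leading block to return to its starting value. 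Your route avoids the minimal-counterexample trick at the cost of some projection bookkeeping, and it has the virtue of making explicit the two structural facts about $\rho_n$ (each two-letter word occurs in at most one relation; the two sides of a relation differ in both letters) that the paper uses only implicitly in the phrase ``the only way this can happen is by returning $a_k'c'$ to $a_kc$''; the paper's shortening argument is terser. One small caveat: your closing aside that a shortlex orientation of $\rho_n$ is a complete rewriting system is true (there are no overlaps because $P_n\cap Q_n=\varnothing$), but completeness by itself yields normal forms rather than cancellativity, so that alternative would still require an argument comparing the normal forms of $xu$ and $xv$; your direct derivation-projection argument, which you rightly prefer, does not have this gap.
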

    \begin{proof}
    We argue for right cancellativity -- left cancellativity is a dual argument. Let $a,b\in\Mn$ with $a\equiv a_{1}\cdots a_{k}$ and $b\equiv b_{1}\cdots b_{l}$ for $a_{1},\dots,a_{k},b_{1},\dots,b_{l}\in X_{n}$. Note that it is sufficient to show that $ac=bc$ implies $a=b$ for a letter $c\in X_{n}$, as we can add more letters and cancel subsequently, proving the result for general words.
    
    Suppose then that $ac\equiv a_{1}\cdots a_{k}c=b_{1}\cdots b_{l}c\equiv bc$. Hence we can reach $bc$ from a sequence of $\rho_{n}$-transitions from $ac$. Since $|ac| = |bc|$, we have $k=l$ and hence we write $a_{1}\cdots a_{k}c=b_{1}\cdots b_{k}c$.
    
    Let us choose a sequence of $\rho_{n}$-transitions from $ac$ to $bc$ of minimal length. We assume for a contradiction that this does not induce a sequence of transitions from $a$ to $b$, i.e. that it includes some transition involving the final letter $c$. If a $\rho_{n}$-transition in our sequence involves $a_{k}c$, then we must have that there exist letters $a_{k}',c'\in X_n$, with either $(a_{k}'c',a_{k}c)\in\rho_{n}$ or $(a_{k}c,a_{k}'c')\in\rho_{n}$ for $a_{k},a_{k}'\in P_{n}$ and $c,c'\in Q_{n}$. But then, since our sequence of transitions must eventually return $c'$ to $c$, the only way this can happen is by returning $a_{k}'c'$ to $a_{k}c$. Thus removing these two steps from our sequence of transitions will give us a new, shorter, sequence of $\rho_{n}$-transitions from $ac$ to $bc$, contradicting minimality. This means that any sequence of $\rho_{n}$-transitions from $ac$ to $bc$ induces a sequence of $\rho_{n}$-transitions from $a$ to $b$, and hence $a=_{\Mn} b$.
    \end{proof}
\begin{prop}
    For any $n \in \mathbb{N}$, $\Mn$ is not group-embeddable (as a monoid).
\end{prop}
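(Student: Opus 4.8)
The plan is to exhibit an explicit quasi-identity (equational implication) that holds in every group but fails in $\Mn$, thereby obstructing any monoid morphism into a group. The natural candidate comes directly from Malcev's original condition underlying the $n=1$ case: in any group, the three relations $da = A_1 C_1$-type equations chain together to force a fourth equation, and the point is to show this forced consequence does \emph{not} hold in $\Mn$. Concretely, I would first extract from the relation set $\rho_n$ a system of equalities of the form $px = qy$, $pu = qv$, $cx = dy$ (in the style of Theorem~\ref{thm:malcevog}) whose group-theoretic solution forces $cu = dv$, then show that $cu \neq_{\Mn} dv$ using the combinatorial control we already have over $R$-transitions.

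The key steps, in order, are as follows. First, suppose toward a contradiction that $\phi : \Mn \to G$ is an injective monoid morphism into a group $G$. Second, translate the defining relations of $\rho_n$ into equations among the images $\phi(a), \phi(b), \dots$ in $G$; since $G$ is a group, every generator maps to an invertible element, so I may freely multiply by inverses. Third, algebraically manipulate this system in $G$: the telescoping structure of the relations (the chain $A_1 D_1 = A_2 C_2$, \dots, $A_{n-1}D_{n-1} = A_n C_n$ bracketed by $da = A_1 C_1$ and $A_n D_n = db$, together with the mirrored $B$-chain) is designed precisely so that, after cancelling the $A_i, B_i, C_i, D_i$ against their inverses, one derives a relation purely among $a,b,c,d$ that says two specified words are equal in $G$ and hence equal under $\phi$. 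Fourth, by injectivity of $\phi$, these two words must already be equal in $\Mn$. Fifth, and finally, I would derive a contradiction by showing directly in $\Mn$ that these two words are \emph{not} equal, using the length- and $P_n/Q_n$-position invariants from the preceding lemma together with a direct analysis of which $\rho_n$-transitions are applicable.

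The main obstacle I expect is the last step: proving a non-equality in $\Mn$ is a word problem computation, and the coarse invariants (length, $P_n$/$Q_n$ pattern) will not by themselves separate the two target words, since they are built from the same letters in matching $P_n/Q_n$ positions. I would therefore need a finer invariant or a careful case analysis tracking exactly which relations can fire on a given word — essentially a confluence or normal-form argument showing that the two words sit in distinct equivalence classes. A cleaner alternative, which I would pursue in parallel, is to avoid computing the forced group-word explicitly and instead argue abstractly: any group image must satisfy the Malcev quasi-identity, but one can read off from the generator-partition structure (noting $P_n \cap Q_n = \varnothing$ and the $2$-letter form of every relation) that $\Mn$ admits two distinct elements forced to coincide in any group, contradicting injectivity.

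Finally, I would record the slicker route that the non-embeddability transfers from the semigroup: since $\mathcal{S}_n$ is known to be non-embeddable and $\mathcal{M}_n$ is its monoid completion (adjoining an identity), a monoid embedding of $\mathcal{M}_n$ into a group $G$ would restrict to a semigroup embedding of $\mathcal{S}_n$ into $G$, contradicting Malcev's result. This reduces the whole proposition to the already-cited classical fact, and is likely the intended short argument; the explicit quasi-identity computation above then serves only as the promised \emph{direct} proof.
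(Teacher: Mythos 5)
Your overall strategy --- assume an embedding $\phi\colon \Mn\to G$, telescope the $\rho_n$-relations inside $G$, pull the forced equality back through injectivity, and contradict it with a word-problem computation in $\Mn$ --- is exactly the paper's route, but two of your concrete steps fail as stated. First, the equation forced by the telescoping is \emph{not} a relation purely among $a,b,c,d$: eliminating $A_1,\dots,A_n$, $B_1,\dots,B_n$ and then $b$ by Tietze transformations shows that the universal group of $\langle X_n \mid \rho_n\rangle$ is free on $\{a,c,d,C_1,\dots,C_n,D_1,\dots,D_n\}$, and in it the images of $a,b,c,d$ freely generate a rank-$4$ subgroup, so no nontrivial relation among those four letters alone is derivable. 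What the chain actually yields is $\phi(B_k)\phi(A_k)^{-1}=\phi(B_{k-1})\phi(A_{k-1})^{-1}$ for $2\le k\le n$, hence $\phi(B_n)\phi(A_n)^{-1}=\phi(B_1)\phi(A_1)^{-1}$, and then $\phi(ca)=\phi(cb)\phi(db)^{-1}\phi(da)=\phi(B_nD_n)\phi(A_nD_n)^{-1}\phi(A_1C_1)=\phi(B_1C_1)$; the target equality to refute is $ca=_{\Mn}B_1C_1$, which mixes lower-case and capital letters. Second, once that target is identified your anticipated ``main obstacle'' disappears: no left- or right-hand side of a relation in $\rho_n$ is a subword of $ca$, so no $\rho_n$-transition applies to $ca$ at all, its equivalence class is the singleton $\{ca\}$, and $ca\neq_{\Mn}B_1C_1$ is immediate --- no finer invariant or confluence argument is needed. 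As written, your proposal neither carries out the forced computation nor completes the separation, so it is not yet a proof. Relatedly, your opening suggestion of exhibiting a failing instance of the single three-premise Malcev quasi-identity ($px=qy,\ pu=qv,\ c'x=d'y \Rightarrow c'u=d'v$) cannot work uniformly in $n$: by the very property of the family recalled in Section~\ref{sec:mn}, any fixed finite set of group quasi-identities holds in $\mathcal{S}_n$ for all sufficiently large $n$, so for large $n$ one genuinely needs the full $(2n+1)$-premise chain that the paper uses.

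Your fallback reduction is essentially sound but does not discharge the paper's aim. Since every relation in $\rho_n$ is length-preserving, the empty word is alone in its class and two nonempty words are equal in $\Mn$ if and only if they are equal in $\mathcal{S}_n$; hence $\Mn$ is $\mathcal{S}_n$ with an identity adjoined, and a monoid embedding of $\Mn$ into a group would restrict to a semigroup embedding of $\mathcal{S}_n$. However, the non-embeddability of $\mathcal{S}_n$ for \emph{all} $n$ is only cited from the literature (Theorem~\ref{thm:malcevog} covers $n=1$ only), and the paper explicitly undertakes to give direct, self-contained proofs; so this route yields the statement as a corollary of known results rather than the direct argument you set out to give, and it still requires you to record the verification that $\mathcal{S}_n$ embeds in $\Mn$.
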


\begin{proof}
    Suppose $\Mn$ is group-embeddable -- we aim for a contradiction. Let $\phi:\Mn \to G$ be a (monoid) embedding of $\Mn$ in a group $G$. We first show that for any $2 \leq k \leq n$, we have $\phi(B_k)\phi(A_k)^{-1} = \phi(B_{k-1})\phi(A_{k-1})^{-1}$. Indeed, using injectivity and the relations in $\rho_n$:\begin{align*}
    \phi(B_k)\phi(A_k)^{-1} &= \phi(B_k)\phi(C_k)\phi(C_k)^{-1}\phi(A_k)^{-1}\\
    &= \phi(B_kC_k)\phi(A_kC_k)^{-1}\\    
    &= \phi(B_{k-1}D_{k-1})\phi(A_{k-1}D_{k-1})^{-1}\\ 
    &= \phi(B_{k-1})\phi(D_{k-1})\phi(D_{k-1})^{-1}\phi(A_{k-1})^{-1}\\
    &= \phi(B_{k-1})\phi(A_{k-1})^{-1}.
    \end{align*}It therefore follows that $\phi(B_n)\phi(A_n)^{-1} = \phi(B_1)\phi(A_1)^{-1}$, and consequently that:\begin{align*}
    \phi(ca) &= \phi(c)\phi(a)\\
    &= \phi(c)\phi(b)\phi(b)^{-1}\phi(d)^{-1}\phi(d)\phi(a)\\
    &= \phi(cb)\phi(db)^{-1}\phi(da)\\
    &= \phi(B_nD_n)\phi(A_nD_n)^{-1}\phi(A_1C_1)\\
    &= \phi(B_n)\phi(D_n)\phi(D_n)^{-1}\phi(A_n)^{-1}\phi(A_1)\phi(C_1)\\
    &= \phi(B_n)\phi(A_n)^{-1}\phi(A_1)\phi(C_1)\\
    &= \phi(B_1)\phi(A_1)^{-1}\phi(A_1)\phi(C_1)\\
    &= \phi(B_1)\phi(C_1)\\
    &= \phi(B_1C_1).
    \end{align*}As $\phi$ is injective, we therefore have $ca =_{\Mn} B_1C_1$. But clearly, no $\rho_n$-transitions can apply to the word $ca$. In particular, there is no sequence of $\rho_n$-transitions from $ca$ to $B_1C_1$ -- a contradiction. Therefore $\Mn$ is not group-embeddable.
\end{proof}

We conclude this section by remarking on some more properties of $\Mn$. We define
\begin{align*}
    L_{n}&=\{\text{Words appearing on the left of relations in } \rho_{n}\}\\
    &=\{cb, da, A_{1}D_{1}, \dots, A_{n}D_{n}, B_{2}C_{2},\dots,B_{n}C_{n}\}.\\
    R_{n}&=\{\text{Words appearing on the right of relations in } \rho_{n}\}\\
    &=\{db, A_1C_1, \dots, A_nC_n, B_1D_1, \dots, B_nD_n\}.
\end{align*}

We note that $L_{n} \cap R_{n} = \varnothing$. By construction, every relation in $\rho_{n}$ has a first component in $L_{n}$ and a second component in $R_{n}$. 

For a fixed $n$, the \emph{left normal form} of a word $w\in X_{n}^*$ is obtained by replacing any two-element subword of $w$ in $R_{n}$ with is corresponding word in $L_{n}$ as dictated by $\rho_{n}$.

\begin{prop}
    Left normal form is indeed a normal form for words in $\Mn$, i.e. for any word $w \in X_n^*$, there is a unique word $w'$ in left normal form such that $w =_{\Mn} w'$.
\end{prop}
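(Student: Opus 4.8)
The plan is to establish existence and uniqueness of the left normal form separately. Existence is immediate: the rewriting procedure replaces two-letter subwords lying in $R_n$ with their $L_n$-counterparts, and since $L_n \cap R_n = \varnothing$, each such replacement strictly decreases the number of subword-occurrences belonging to $R_n$ (or at least cannot introduce new $R_n$-subwords in a way that loops forever); as $|w|$ is preserved under $\rho_n$-transitions, the process must terminate at a word with no two-letter subword in $R_n$, which is by definition in left normal form. The genuine content of the proposition is uniqueness, and the natural route is to show that the rewriting system given by the rules $R_n \to L_n$ (i.e. orienting every relation so the right-hand word rewrites to the left-hand word) is \emph{confluent}, since a terminating and confluent rewriting system yields unique normal forms.

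Because the system is terminating and length-preserving, by Newman's Lemma it suffices to check \emph{local confluence}: whenever two distinct rewrite rules apply to a word $w$, the two resulting words can be rewritten to a common descendant. The standard way to do this is to examine \emph{critical pairs}, i.e. the situations where the two rewritten subwords overlap. Since every rule rewrites a subword of length exactly $2$, two applications either act on disjoint positions -- in which case they trivially commute and immediately reconverge -- or they overlap in exactly one letter, giving a three-letter configuration $xyz$ where both $xy \in R_n$ and $yz \in R_n$. So the core of the argument reduces to a finite case analysis: enumerate all three-letter words $xyz$ with $xy, yz \in R_n$, and for each, verify that the two descendants (rewriting $xy$ first versus $yz$ first) can be brought back together.

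The key structural observation that makes this tractable is the left/right letter partition from the earlier lemma: every word in $R_n$ has its first letter in $P_n = \{c,d,A_i,B_i\}$ and its second letter in $Q_n = \{a,b,C_i,D_i\}$, and $P_n \cap Q_n = \varnothing$. Thus in an overlap $xyz$ with $xy \in R_n$ and $yz \in R_n$, the shared middle letter $y$ would have to lie in $Q_n$ (as the second letter of $xy$) and simultaneously in $P_n$ (as the first letter of $yz$) -- which is impossible. Hence \textbf{there are no nontrivial critical pairs}: every pair of applicable rewrites acts on disjoint positions, local confluence holds trivially, and the system is confluent.

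I expect the main obstacle to be purely presentational rather than mathematical: one must set up the rewriting framework cleanly (orientation of rules, the termination/length argument, and an invocation of Newman's Lemma or a direct diamond argument) so that the elegant $P_n/Q_n$ disjointness observation does the real work without hand-waving. A minor subtlety to address carefully is termination: one should confirm that rewriting $R_n \to L_n$ genuinely terminates, which follows since $L_n \cap R_n = \varnothing$ means a newly-created left-hand word is never itself a right-hand word, so one can argue by a suitable well-founded measure (for instance, a lexicographic or positional weighting that strictly decreases, or simply noting the letters involved partition cleanly). Once termination and the empty set of critical pairs are in place, uniqueness is immediate, completing the proof.
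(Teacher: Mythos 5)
Your proposal is correct and is essentially the paper's own argument in expanded form: the paper asserts existence by rewriting $R_n$-subwords into their $L_n$-counterparts and states that uniqueness ``follows from $P_n \cap Q_n = \varnothing$ and $L_n \cap R_n = \varnothing$'', which is exactly the termination-plus-no-critical-pairs (confluence) observation you make explicit via Newman's Lemma. One small point to tighten: termination needs $P_n \cap Q_n = \varnothing$ in addition to $L_n \cap R_n = \varnothing$, since a replacement could in principle create a new $R_n$-subword straddling the rewritten pair and a neighbouring letter, and it is the $P_n$/$Q_n$ disjointness (first letters of $L_n$-words lie in $P_n$, second letters in $Q_n$) that rules this out.
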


\begin{proof}
    Let $w \in X_n^*$. By replacing all two-element subwords of $w$ in $R_n$ by its corresponding word in $L_n$, we may clearly obtain a word $w'$ in left normal form. As such a word was obtained via $\rho_n$-transitions, $w = w'$. It remains to show that each $M$-class contains a unique word in left normal form -- this follows from $P_n \cap Q_n = \varnothing$ and $L_n \cap R_n = \varnothing$.
\end{proof}

\begin{example}\label{ex:1}
    Suppose $n = 2$. The left normal form of $abaC_2dbcA_1B_1D_1$ is \[abaC_2A_2D_2cA_1B_2C_2.\]
\end{example}

Finally, since in each $\Mn$ the trivial word $\epsilon$ evaluates to $1_{\Mn}$, all words evaluating to $1_{\Mn}$ must have also length 0 -- the only such word is $\epsilon$. Hence we have the following result.

\begin{prop}\label{prop:acyclic}
    The Cayley Graph $\Cay(\Mn;X_n)$ is directed acyclic.
\end{prop}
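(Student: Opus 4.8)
The plan is to argue by contradiction, reading a directed cycle as an algebraic identity that the length-preservation property of $=_{\Mn}$ forbids. Recall that a directed graph is acyclic precisely when it admits no nonempty closed directed walk, so it suffices to rule these out. Suppose then that $\Cay(\Mn;X_n)$ contains a nonempty closed directed walk, say $p_0 \xrightarrow{x_1} p_1 \xrightarrow{x_2} \cdots \xrightarrow{x_m} p_m = p_0$ with each $x_i \in X_n$ and $m \geq 1$.

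By the definition of the edge set, each edge $p_{i-1} \xrightarrow{x_i} p_i$ records the identity $p_{i-1} x_i =_{\Mn} p_i$; composing these along the walk gives $p_0\,x_1 x_2 \cdots x_m =_{\Mn} p_m = p_0$. Setting $w \equiv x_1 x_2 \cdots x_m \in X_n^*$, this reads $p_0 w =_{\Mn} p_0$, and since the walk is nonempty we have $|w| = m \geq 1$.

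To finish, I would represent the vertex $p_0 \in \Mn$ by some word $u \in X_n^*$, so that $uw =_{\Mn} u$. Here the earlier Lemma does the work: because $=_{\Mn}$ preserves length, $|u| + |w| = |uw| = |u|$, whence $|w| = 0$, contradicting $|w| \geq 1$. Equivalently, one could left-cancel $u$ from $uw =_{\Mn} u$ to obtain $w =_{\Mn} \epsilon$ and then invoke the observation that $\epsilon$ is the only word representing $1_{\Mn}$; the length computation is cleaner, as it sidesteps cancellativity.

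I do not expect a genuine obstacle in this argument. The only point requiring care is the translation step: correctly interpreting a closed directed walk of length $m \geq 1$ as the equation $p_0 w =_{\Mn} p_0$ for a word $w$ of positive length, after which length-preservation yields the contradiction immediately.
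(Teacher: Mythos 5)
Your argument is correct and matches the paper's reasoning: the paper likewise derives acyclicity from the length-preservation lemma (via the observation that the empty word is the only word representing $1_{\Mn}$), which is exactly the mechanism you use, and your direct length computation on $uw =_{\Mn} u$ is a clean way of phrasing it. No issues.
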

\section{Singly aligned Monoids}\label{sec:rLCM}
Recall that a (left) cancellative monoid $M$ is called \textit{singly aligned} if for all $p,q \in M$, either $pM \cap qM = \varnothing$ or \[pM \cap qM = rM\text{ for some }r \in M.\]This section is devoted to showing that (most of) the monoids $\mathcal{M}_n$ defined in Section~\ref{sec:mn} are singly aligned.

Given $n \in \mathbb{N}$, recall the definitions of $\Mn$, $X_n$, $\rho_n$, $P_n$, $Q_n$, $L_n$ and the concept of left normal form. For any $v \in \Mn$, note that the right ideal $v\Mn$ is exactly the set of vertices reachable from $v$ in the Cayley graph of $\Mn$.

Let $v \in \Mn$ with left normal form $a_1\cdots a_{m-1}a_m$ for $a_1,\dots,a_m \in X_n$. We call $v$ an \textit{intersection base} if $a_{m-1}a_m \in L_n$.

\begin{example}
    Let $n = 2$. Recall the left normal form given in Example \ref{ex:1}: the word $abaC_2dbcA_1B_1D_1$ is an intersection base since $B_2C_2 \in L_2$.
\end{example}

In the sequel, we will see that intersection bases are exactly the generators required for non-trivial intersections of principal right ideals.

\begin{lem}\label{lem:int2}
    Let $v \in \Mn$. The vertex of $\Cay(\Mn;X_{n})$ corresponding to $v$ has in-degree at least $2$ if and only if $v$ is an intersection base.
\end{lem}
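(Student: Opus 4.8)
The plan is to establish both directions by examining the incoming edges at the vertex $v$ in the Cayley graph, using co-determinism (Proposition~\ref{prop:codet}) as the key structural tool. An edge into $v$ has the form $u \xrightarrow{x} v$ with $ux =_{\Mn} v$, so the in-neighbours of $v$ are precisely the elements $u$ such that $v$ factorises as $ux$ for some generator $x \in X_n$. Since $\Mn$ is cancellative, $u$ is determined by $x$ via co-determinism, so distinct incoming edges correspond to distinct final-letter factorisations $v = ux$. Thus I need to show that $v$ admits at least two such factorisations (as elements of $\Mn$, i.e. with $u$ ranging over monoid elements) if and only if the penultimate-and-final letters of the left normal form of $v$ form a word in $L_n$.

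First I would treat the forward direction ($v$ an intersection base $\Rightarrow$ in-degree $\geq 2$). Write the left normal form of $v$ as $a_1 \cdots a_{m-1} a_m$ with $a_{m-1}a_m \in L_n$. Then there is a relation $(a_{m-1}a_m, b_{m-1}b_m) \in \rho_n$ with $b_{m-1}b_m \in R_n$, so $v =_{\Mn} a_1 \cdots a_{m-2} b_{m-1} b_m$ as well. This immediately gives two factorisations of $v$ with distinct final letters: $v = (a_1\cdots a_{m-1})a_m$ and $v = (a_1 \cdots a_{m-2}b_{m-1})b_m$. Since $a_m$ and $b_m$ are the respective final letters, and they are genuinely different generators (this is where I'd cite $L_n \cap R_n = \varnothing$ together with the explicit shape of $\rho_n$ to confirm $a_m \neq b_m$ in each relation), co-determinism forces the two incoming edges $a_1\cdots a_{m-1} \xrightarrow{a_m} v$ and $a_1\cdots a_{m-2}b_{m-1} \xrightarrow{b_m} v$ to be distinct, giving in-degree at least $2$.

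For the converse ($\text{in-degree} \geq 2 \Rightarrow v$ an intersection base), suppose $v$ has two distinct incoming edges, yielding two factorisations $v =_{\Mn} u x = u' x'$ with the edges distinct. By co-determinism the two edges can share neither label nor source, and since the source is determined by the label, it suffices to arrange that the two final letters $x, x'$ differ. I would reduce to comparing left normal forms: writing the left normal form of $v$ as $a_1 \cdots a_m$, any final-letter factorisation $v = ux$ means $x$ is the final letter of \emph{some} word equal to $v$. The crux is to show that $v$ has two distinct possible final letters precisely when a relation can be applied at the tail of its normal form, i.e. when $a_{m-1}a_m \in L_n$. This is the main obstacle: I must argue that if $a_{m-1}a_m \notin L_n$ then every word equal to $v$ has the same final letter, so that $x = x'$ and the edges coincide. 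The key facts I would invoke are the uniqueness of left normal form (the earlier proposition), together with the observation that the only way to change the final letter of a word via a $\rho_n$-transition is to apply a relation whose two-letter side occupies the last two positions. Because each relation pairs a word in $L_n$ with one in $R_n$ and these sets are disjoint, the tail $a_{m-1}a_m$ of the \emph{normal form} (which by definition contains no subword in $R_n$) can be altered only if it itself lies in $L_n$; otherwise no transition touches position $m$, pinning the final letter. I expect this rigidity argument — carefully showing that non-tail transitions cannot propagate to change the last letter while position $m-1$, $m$ stays fixed — to require the most care, and I would lean on parts (2) and (3) of the earlier Lemma (preservation of $P_n$/$Q_n$ membership positionwise) to control which letters can occupy the final slot.
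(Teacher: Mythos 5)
Your proposal is correct and follows essentially the same route as the paper's proof: the forward direction applies the relation at the tail of the left normal form to produce two incoming edges with distinct labels, and the converse combines co-determinism (distinct edges have distinct labels) with the positionwise $P_n$/$Q_n$ preservation and the disjointness/structure of $L_n$ and $R_n$ to force a relation at the final two positions. Your converse is merely phrased contrapositively (the final letter is ``pinned'' unless the tail lies in $L_n$) rather than the paper's direct argument, but the ingredients and the key case analysis are the same.
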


\begin{proof}
    Let $v$ be given in left normal form $v \equiv a_1a_2\cdots a_m$ for $a_1,\dots,a_m \in X_n$. We identify $v$ with the corresponding vertex of  $\Cay(\Mn;X_{n})$. 
    
    If $v$ is an intersection base, then the final two letters $a_{m-1}a_m$ of $v$ appear in some relation $(a_{m-1}a_m,rs) \in \rho_n$. Note that $a_{m-1} \neq r$ and $a_m \not\equiv s$, and thus \[a_1a_2\cdots a_{m-1} \neq a_1a_2\cdots a_{m-2}r.\]and there exist distinct edges \[(a_1\cdots a_{m-1}) \xrightarrow{a_m} v \text{ and } (a_1\cdots a_{m-2}r) \xrightarrow{s} v\] in $\Cay(\Mn;X_{n})$. Hence $v$ has in-degree at least $2$.

    Conversely, suppose $v$ has in-degree at least $2$. Let $a \xrightarrow{x} v$ and $b \xrightarrow{y} v$ be distinct edges of $\Cay(\Mn;X_{n})$. Since $\Mn$ is cancellative, $x \not\equiv y$ by Proposition \ref{prop:codet}. Since $ax = by$, either both $x,y \in P_n$ or both $x,y \in Q_n$. If both $x,y \in P_n$, then by considering the left normal form $v$ of $ax$ and $by$, we see that $x \equiv y$ and we arrive at a contradiction. Hence both $x,y \in Q_n$.

    Since $x \not\equiv y$ but $x,y \in Q_n$ and $ax = by$, it must be that there is some relation $(cx,dy)$ or $(dy,cx)$ in $\rho_n$ where $cx$ is the final two letter subword of $ax$. In particular, as $v = ax$, the final two-letter subword of $v$ is in $L_n$ as required.
\end{proof}

\begin{lem}\label{lem:intoint}
    If $v$ is an intersection base and there exists a vertex $u$ and edge $u \xrightarrow{x} v$, then $x \in Q_n$.
\end{lem}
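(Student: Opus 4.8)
The plan is to analyze what kind of edge can target an intersection base $v$ by examining its left normal form and the structure of the relations $\rho_n$. Let $v$ be given in left normal form $a_1 \cdots a_{m-1} a_m$, so that by the definition of an intersection base, the final two-letter subword $a_{m-1}a_m$ lies in $L_n$. My starting observation is that since $a_{m-1}a_m \in L_n$, there is a relation $(a_{m-1}a_m, rs) \in \rho_n$; inspecting $\rho_n$ shows that the left-hand words of relations are exactly $L_n = \{cb, da, A_iD_i, B_iC_i\}$, and in every case the first letter $a_{m-1} \in P_n$ while the second letter $a_m \in Q_n$. This pins down the ``type'' of the last letter of $v$.

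Now suppose there is an edge $u \xrightarrow{x} v$, so that $ux =_{\Mn} v$. First I would argue that $u$ itself should be taken in left normal form, say $u \equiv b_1 \cdots b_{m-1}$ (note $|u| = m - 1$ by part (1) of the length lemma). The word $ux \equiv b_1 \cdots b_{m-1} x$ represents $v$, but $v$'s left normal form is $a_1 \cdots a_m$. The key step is to compare positions: by parts (2) and (3) of the earlier lemma, equality in $\Mn$ preserves, position by position, membership in $P_n$ versus $Q_n$. In particular the $m^{\textrm{th}}$ (final) letter of any word equal to $v$ must lie in the same set as $a_m$. Since $a_m \in Q_n$ (as established above), the final letter $x$ of $ux$ must also lie in $Q_n$, which is exactly the claim.

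The main subtlety I anticipate is the interplay between ``left normal form'' and the position-preservation lemma: the lemma is stated for arbitrary words $w =_{\Mn} w'$, and $v$'s left normal form is one such representative while $ux$ is another, so applying the lemma to the pair $(ux, a_1 \cdots a_m)$ is exactly legitimate and immediately gives that the final letter $x$ is in $Q_n$ iff $a_m \in Q_n$. The only thing to verify carefully is that every relation's left-hand component genuinely has its second letter in $Q_n$ — this is a finite inspection of the list $L_n$ and the definition of $Q_n = \{a, b, C_1, \dots, C_n, D_1, \dots, D_n\}$, and indeed each of $cb, da, A_iD_i, B_iC_i$ ends in a letter of $Q_n$. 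Once this is checked, no case analysis on whether $x \in P_n$ can arise, since $x \in P_n$ would force (by position preservation) $a_m \in P_n$, contradicting $a_m \in Q_n$ and the disjointness $P_n \cap Q_n = \varnothing$.

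Alternatively, and perhaps more cleanly, I would note that the forward direction of Lemma~\ref{lem:int2} already exhibited two incoming edges to an intersection base, both labelled by letters in $Q_n$ (namely $a_m$ and $s$, where $a_m, s \in Q_n$). To conclude that \emph{every} incoming edge has a label in $Q_n$, I invoke the converse argument inside the proof of Lemma~\ref{lem:int2}: there it is shown that for distinct incoming edges $a \xrightarrow{x} v$ and $b \xrightarrow{y} v$, one cannot have both labels in $P_n$, and moreover any single label in $P_n$ would force $v$'s final letter into $P_n$ via left normal form. Since the final letter of the intersection base $v$ lies in $Q_n$, no incoming edge can be labelled by a letter of $P_n$, giving the result directly.
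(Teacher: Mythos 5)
Your proposal is correct and takes essentially the same route as the paper: its one-line proof is precisely your observation that every word of $L_n$ ends in a letter of $Q_n$, combined with the position-preserving lemma applied to the equality $ux =_{\Mn} v$ whose left normal form is that of the intersection base $v$. (Your list of left-hand words slightly overstates $L_n$ --- $B_1C_1 \notin L_n$ --- but this is harmless, since every word you list still ends in a letter of $Q_n$.)
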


\begin{proof}
    In left normal form, the final letter of $ux = v$ is in $Q_n$.
\end{proof}

\begin{lem}\label{lem:star}
    Let $p,q\in \Mn \setminus \{1\}$ and let $x,y$ be words over $X_{n}$ with lengths at least $1$. Let $x_k$ be the final letter of $x$, and let $y_l$ be the final letter of $y$, and suppose $x_k \not\equiv y_l$. Suppose that $v:= px = qy$ has the following property
    \begin{equation}\label{eq:star}
        \textrm{For any } u \in p\Mn \cap q\Mn\textrm{, there is no edge }u \rightarrow v\textrm{ in }\Cay(\Mn;X_{n}).\tag{$\star$}
    \end{equation}and that there is no path from $p$ to $q$ in $\Cay(\Mn;X_n)$. Then 
    \begin{enumerate}
        \item $px = qy$ is an intersection base;
        \item $x_k,y_l \in Q_n$;
        \item $|x| = 1$, i.e. $x \equiv x_k$.
    \end{enumerate}
    
\end{lem}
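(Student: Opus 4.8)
The plan is to read the three conclusions off the in-edge structure of the vertex $v$ in $\Cay(\Mn;X_n)$, using the two edges that the factorisations $v = px = qy$ force into $v$. Write $x \equiv x_1\cdots x_k$ and $y \equiv y_1\cdots y_l$, and set $\tilde p := p x_1\cdots x_{k-1}$ and $\tilde q := q y_1\cdots y_{l-1}$, so that $\tilde p \xrightarrow{x_k} v$ and $\tilde q \xrightarrow{y_l} v$ are edges. Since $x_k \not\equiv y_l$, left cancellativity forces $\tilde p \ne \tilde q$ (otherwise $\tilde p x_k = v = \tilde p y_l$ would give $x_k =_{\Mn} y_l$, impossible for distinct letters), so these are two distinct edges into $v$ and $v$ has in-degree at least $2$. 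By Lemma~\ref{lem:int2}, $v$ is an intersection base, giving (1).

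For (2), I apply the letter-position lemma (that $\Mn$-equal words agree on which positions lie in $P_n$) to $px =_{\Mn} qy$, obtaining $x_k \in P_n \iff y_l \in P_n$. I then rule out $x_k,y_l \in P_n$: every word of $L_n \cup R_n$ ends in a letter of $Q_n$, so a $P_n$-letter sitting in the final position is never the second letter of a rewrite, and (being final) never the first letter of one either; hence it is untouched by the $R_n\to L_n$ rewriting computing left normal form. Thus the last letter of the left normal form of $v$ would be forced to equal both $x_k$ and $y_l$, contradicting uniqueness of left normal form together with $x_k \not\equiv y_l$. Therefore $x_k,y_l \in Q_n$, which is (2).

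For (3) I first pin down the in-edges of $v$ exactly. Writing $v$ in left normal form $a_1\cdots a_m$ with $(a_{m-1}a_m,rs)\in\rho_n$ and $w := a_1\cdots a_{m-2}$, the last two letters of any representative of $v$ are either $a_{m-1}a_m$ or $rs$ (the single applicable relation merely toggles them), so the final letter of a representative is $a_m$ or $s$; as every relation of $\rho_n$ has $a_{m-1}\not\equiv r$ and $a_m\not\equiv s$, the vertex $v$ has exactly the two in-edges $w a_{m-1}\xrightarrow{a_m} v$ and $w r\xrightarrow{s} v$. Matching these against $\tilde p,\tilde q$, in either assignment $\tilde p = w\ell$ for a letter $\ell\in\{a_{m-1},r\}\subseteq P_n$ (both are initial letters of words in $L_n\cup R_n$). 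So $\tilde p$ carries an in-edge from $w$ whose label lies in $P_n$; by the contrapositive of Lemma~\ref{lem:intoint} it is not an intersection base, and hence by Lemma~\ref{lem:int2} it has in-degree exactly $1$.

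The argument then closes by contradiction. Suppose $|x|\ge 2$. The path from $p$ to $\tilde p$ ends in an edge $p x_1\cdots x_{k-2}\xrightarrow{x_{k-1}}\tilde p$, a second in-edge of $\tilde p$; as $\tilde p$ has in-degree $1$, this must be the edge $w\xrightarrow{\ell}\tilde p$ itself, whence $p x_1\cdots x_{k-2} =_{\Mn} w$ and so $w\in p\Mn$. But the remaining in-neighbour $\tilde q$ equals $w\ell'$ for the other letter $\ell'$, giving $\tilde q\in w\Mn\subseteq p\Mn$; since also $\tilde q\in q\Mn$, we obtain $\tilde q\in p\Mn\cap q\Mn$ with an edge $\tilde q\to v$, contradicting~\eqref{eq:star}. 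Hence $|x|=1$. I expect the structural determination of the two in-edges of $v$, and the clean reduction to an in-degree-$1$ vertex via Lemmas~\ref{lem:intoint} and~\ref{lem:int2}, to be the main obstacle; the hypothesis that there is no path from $p$ to $q$ appears to serve mainly to guarantee non-degeneracy (in particular $p\ne q$) in the ambient argument, and I would keep it in reserve in case the in-edge matching requires it to exclude a coincidental alignment of $\tilde p$ and $\tilde q$.
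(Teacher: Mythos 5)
Your proof is correct, but it takes a genuinely different route from the paper's, most visibly in part (3). The paper first uses \eqref{eq:star} to make the two paths into $v$ vertex-disjoint (giving in-degree $\geq 2$), and then, writing $qy \equiv q'ay_l$, deduces $px_1\cdots x_{k-2} = q'$ and splits into two cases: if $|q'| \leq |q|$ it builds a path $p \to q' \to q$ and contradicts the no-path hypothesis, while if $|q'| \geq |q|$ it contradicts \eqref{eq:star} via acyclicity. You instead get in-degree $\geq 2$ immediately from the two distinctly labelled edges $\tilde p \xrightarrow{x_k} v$, $\tilde q \xrightarrow{y_l} v$ (no disjointness needed), prove (2) by final-letter invariance rather than by citing Lemma~\ref{lem:intoint}, and for (3) you pin down the in-edge structure of $v$ exactly (the two edges $wa_{m-1}\xrightarrow{a_m} v$ and $wr \xrightarrow{s} v$), use the contrapositive of Lemma~\ref{lem:intoint} together with Lemma~\ref{lem:int2} to see that $\tilde p$ has in-degree $1$, force $px_1\cdots x_{k-2} = w$, and contradict \eqref{eq:star} directly through $\tilde q = w\ell' \in p\Mn \cap q\Mn$. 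A notable payoff of your route is that the ``no path from $p$ to $q$'' hypothesis is never used -- and indeed it is redundant: if $q \in p\Mn$ then $\tilde q = qy_1\cdots y_{l-1} \in p\Mn \cap q\Mn$ already violates \eqref{eq:star}, so your suspicion that it only guards against degeneracy is essentially right (it is there to match how the lemma is invoked twice, symmetrically, in Proposition~\ref{prop:cases}). The paper's version is shorter given its earlier lemmas; yours gives a sharper structural picture and avoids the case split. One caveat: your claim that $v$ has \emph{exactly} those two in-edges rests on two unstated but routine facts -- no rewrite can ever apply at positions $(m-2,m-1)$ because position $m-1$ always carries a $P_n$-letter while every word of $L_n \cup R_n$ ends in a $Q_n$-letter, and each word of $L_n \cup R_n$ occurs in exactly one relation of $\rho_n$ (so the last two letters only toggle and the length-$(m-2)$ prefix stays $\Mn$-equal to $w$); these deserve a sentence, though they are of the same granularity as steps the paper itself leaves implicit.
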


\begin{proof}
    Let $v = px = qy$. Write $x \equiv x_1\cdots x_k$ and $y \equiv y_1\cdots y_l$ for letters \linebreak$x_1,\dots,x_k,y_1,\dots,y_l \in X_n$. Note $v \not\in \{p,q\}$.
    
    We claim that the paths in $\Cay(\Mn;X_{n})$ from $p$ to $v$ labelled $x_1,\dots,x_k$ and from $q$ to $v$ labelled $y_1,\dots,y_l$ are vertex-disjoint (except for the vertex $v$). Indeed, if there was some vertex $u$ on both paths, then $u \in p\Mn \cap q\Mn$ and there is a path from $u$ to $v$, contradicting the property \eqref{eq:star} of $v$ (unless $u$ is exactly $v$).
    
    We have $|px|,|qy| \geq 2$. Since the paths labelled $x_1,\dots,x_k$ and $y_1,\dots,y_l$ are vertex-disjoint, we have $x_k \not\equiv y_l$ and $v$ must have in-degree at least $2$. Hence $v$ is an intersection base by Lemma \ref{lem:int2}. Moreover, $x_k, y_l \in Q_n$ by Lemma \ref{lem:intoint}. 
    
    Decompose $qy \equiv q'ay_l$ for a prefix $q' \in X_n^*$ and letter $a \in X_n$. If $k \geq 2$, then since $px = qy$ and $x_k \not\equiv y_l$, we have that $(x_{k-1}x_k,ay_l)$ or $(ay_l,x_{k-1}x_k) \in \rho_n$. Then $px_1\cdots x_{k-2} = q'$ and hence there is a path from $p$ to $q'$ (labelled $x_1\cdots x_{k-2}$). If $|q'| \leq |q|$, then $q'$ is a subword of $q$, and so there exists a path from $q'$ to $q$, but then there exists a path from $p$ to $q$, a contradiction.
    
    If instead $|q'| \geq |q|$, then $q' \in p\Mn \cap q\Mn$, and there exists a path from $q'$ to $v$ (labelled $ay_l$). By the property \eqref{eq:star} of $v$, we must have $v = q'$. But then $q' = v = qy = q'ay_l$, and so $ay_l = 1$ by cancellativity, contradicting that $\Cay(\Mn;X_{n})$ is directed acyclic (Proposition \ref{prop:acyclic}).

    Otherwise, we must have $k = 1$, i.e. $|x| = 1$.
\end{proof}

\begin{prop}\label{prop:cases}
    Let $p,q \in \Mn$ such that $p\Mn \cap q\Mn \neq \varnothing$. Then either:
    \begin{enumerate}
        \item In $\Cay(\Mn;X_{n})$, either $p$ is reachable from $q$ or $q$ is reachable from $p$.
        \item There exist $x,y \in Q_n$ such that $v:= px = qy$ is an intersection base, and for any $u \in p\Mn \cap q\Mn$, there is no edge $u \rightarrow v$ in $\Cay(\Mn;X_{n})$.
    \end{enumerate}
\end{prop}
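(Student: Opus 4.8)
The plan is to prove the dichotomy by assuming part (1) fails and deriving part (2). Here the failure of part (1) means that \emph{neither} $p$ is reachable from $q$ \emph{nor} $q$ is reachable from $p$ in $\Cay(\Mn;X_n)$, i.e. there is no directed path in either direction between $p$ and $q$. Before invoking this, I would dispose of the degenerate cases: if $p = 1$ then $p\Mn = \Mn \supseteq q\Mn$, so the intersection equals $q\Mn$ and $q$ is reachable from $1$ via the path labelled by $q$, placing us in part (1); the case $q = 1$ is symmetric. Hence we may assume $p,q \in \Mn \setminus \{1\}$.

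The central idea is to choose the witness $v$ for the intersection to have \emph{minimal length}. Since lengths are preserved under $=_{\Mn}$, every element of the non-empty set $p\Mn \cap q\Mn$ has a well-defined length, so I may pick $v \in p\Mn \cap q\Mn$ with $|v|$ minimal. This single choice immediately furnishes property \eqref{eq:star}: any edge $u \to v$ forces $|u| = |v| - 1 < |v|$, so no $u \in p\Mn \cap q\Mn$ can be the source of an edge into $v$ without contradicting minimality. This is the step that converts the abstract non-emptiness of the intersection into the concrete hypothesis \eqref{eq:star} required by Lemma \ref{lem:star}.

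Next I would fix words $x \equiv x_1\cdots x_k$ and $y \equiv y_1\cdots y_l$ over $X_n$ with $v = px = qy$ and check the remaining hypotheses of Lemma \ref{lem:star}. If either $x$ or $y$ were empty, then $p$ or $q$ would be reachable from the other and part (1) would hold, so both have length at least $1$. Moreover the final letters satisfy $x_k \not\equiv y_l$: otherwise right cancellativity applied to $px = qy$ would yield $u := px_1\cdots x_{k-1} = qy_1\cdots y_{l-1} \in p\Mn \cap q\Mn$ together with an edge $u \xrightarrow{x_k} v$, contradicting \eqref{eq:star}. With the absence of a path from $p$ to $q$ also in hand, all hypotheses of Lemma \ref{lem:star} hold, giving that $v$ is an intersection base, that $x_k, y_l \in Q_n$, and that $|x| = 1$.

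To finish, I would apply Lemma \ref{lem:star} a \emph{second} time with the roles of $(p,x)$ and $(q,y)$ interchanged; this is legitimate precisely because the failure of part (1) also supplies the absence of a path from $q$ to $p$, which is the one asymmetric hypothesis of that lemma. This second application yields $|y| = 1$, so $x$ and $y$ are single letters of $Q_n$ with $v = px = qy$ an intersection base satisfying \eqref{eq:star} --- which is exactly part (2). The main point to get right is this two-fold symmetric use of Lemma \ref{lem:star}: one must notice that the failure of part (1) delivers both non-reachability directions at once, so that the lemma may be run on each side to pin down $|x| = |y| = 1$. Everything else is a routine verification of hypotheses.
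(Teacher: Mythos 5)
Your proposal is correct and follows essentially the same route as the paper: isolate the degenerate cases into part (1), select a witness $v$ in the intersection satisfying \eqref{eq:star}, and then apply Lemma \ref{lem:star} twice, once in each direction, to conclude $|x|=|y|=1$ with $x,y\in Q_n$ and $v$ an intersection base. The only cosmetic difference is that you obtain \eqref{eq:star} by choosing $v$ of minimal length (using length-preservation), whereas the paper traces edges backwards using acyclicity of $\Cay(\Mn;X_n)$; both are valid.
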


\begin{proof}
    Suppose $p\Mn \cap q\Mn \neq \varnothing$. Choose $v \in p\Mn \cap q\Mn$ such that there is no $u \in p\Mn \cap q\Mn$ with edge $u \rightarrow v$. Recalling that $\Cay(\Mn;X_{n})$ is directed acyclic (Proposition \ref{prop:acyclic}), note that we may always choose such a $v$ by choosing any element in the intersection and `tracing edges back' in $\Cay(\Mn;X_{n})$ until we can no longer remain in $p\Mn \cap q\Mn$.

    Write $v = px = qy$ for some $x,y \in X_{n}^*$. If $v \in \{p,q\}$, then since there are paths from $p$ to $v$ and $q$ to $v$, we find ourselves in Case (1). Now suppose $v \notin \{p,q\}$, and that there is no path from $p$ to $q$ or $q$ to $p$. In particular, we may assume $p \neq q$ and $p,q,x,y \neq 1$.
    
    We now apply Lemma \ref{lem:star} twice. Since there is no path from $p$ to $q$, Lemma \ref{lem:star} implies that $k = |x| = 1$. Dually, since there is no path from $q$ to $p$, Lemma \ref{lem:star} implies that $l = |y| = 1$. Moreover, $x,y \in Q_n$ and $v$ is an intersection base. We therefore arrive in Case (2).
\end{proof}

We note that Case (1) corresponds exactly to the condition of right rigidity \cite{cohn:free,doss:immersion}, however Case (2) may (and indeed does) arise for elements of general $\mathcal{M}_n$. For example, consider the elements $p = A_1$ and $q = d$.

For the remainder of this section, we now consider only the case where $n \geq 2$. We discuss the $n = 1$ case in Section~\ref{sec:2ali}.

\begin{lem}\label{lem:uniquev}
    Suppose $n \geq 2$. Let $p,q \in \Mn \setminus \{1\}$ and suppose $v_1 := px = qy$ and $v_2:= pw = qz$ for some $x,y,w,z \in X_n$. If $p \neq q$, then $v_1$ and $v_2$ are intersection bases, and $v_1 = v_2$.
\end{lem}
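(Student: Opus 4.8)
The plan is to establish the three conclusions in turn: first that $v_1$ and $v_2$ are intersection bases, and then, after a short reduction, that $v_1 = v_2$. For the first conclusion, observe that since $v_1 = px = qy$ with $p \neq q$, right cancellativity forces $x \not\equiv y$; hence $p \xrightarrow{x} v_1$ and $q \xrightarrow{y} v_1$ are two distinct edges of $\Cay(\Mn;X_n)$ with target $v_1$, so $v_1$ has in-degree at least $2$ and is an intersection base by Lemma~\ref{lem:int2}. The identical argument applies to $v_2$, and by Lemma~\ref{lem:intoint} all four letters $x,y,w,z$ lie in $Q_n$. For the equality $v_1 = v_2$, I would reduce to showing $x \equiv w$: once this holds, $v_1 = px = pw = v_2$ immediately (the symmetric statement $y \equiv z$ would serve equally well).

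The main tool I would develop is a description of the length-$2$ suffix of an intersection base across \emph{all} of its word representatives. Writing $v_1$ in left normal form $c_1 \cdots c_{m-2}\alpha\beta$, the final subword $\alpha\beta$ lies in $L_n$ and is the left side of a unique relation $(\alpha\beta, \gamma\delta) \in \rho_n$, where $\alpha,\gamma \in P_n$ and $\beta,\delta \in Q_n$, with $\alpha \not\equiv \gamma$ and $\beta \not\equiv \delta$. I claim that every word representing $v_1$ has length-$2$ suffix equal to $\alpha\beta$ or $\gamma\delta$. The point is that the letter in position $m-1$ is always of $P_n$-type (membership of each position in $P_n$ is preserved under $=_{\Mn}$, and $\alpha \in P_n$); since no word occurring in a relation of $\rho_n$ ends in a $P_n$-letter, no relation can ever be applied to the window $\{m-2,m-1\}$, so positions $m-1,m$ can only be altered by applying a relation to the window $\{m-1,m\}$ itself. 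As each element of $L_n \cup R_n$ occurs in exactly one relation (using $L_n \cap R_n = \varnothing$ and the distinctness of the listed words, so every vertex of the ``relation graph'' has degree $1$), the suffix can only oscillate between the two sides $\alpha\beta$ and $\gamma\delta$, proving the claim.

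With this in hand I would identify the relation attached to $v_1$. Since $p,q \neq 1$, they have well-defined final letters $\pi,\kappa$, and by position preservation $\pi,\kappa \in P_n$. Taking left normal forms of $p$ and $q$ and appending $x,y$ exhibits representatives of $v_1$ with length-$2$ suffixes $\pi x$ and $\kappa y$; by the claim each is a side of $(\alpha\beta,\gamma\delta)$, and since $x \not\equiv y$ they must be its two distinct sides, so $\{\pi,\kappa\} = \{\alpha,\gamma\}$ is its $P_n$-pair. Running the same argument for $v_2$ shows its relation has the identical $P_n$-pair $\{\pi,\kappa\}$. The hypothesis $n \geq 2$ now enters decisively: the $P_n$-pairs of the relations are $\{d,A_1\},\{A_1,A_2\},\dots,\{A_{n-1},A_n\},\{A_n,d\}$ together with $\{c,B_n\},\{B_n,B_{n-1}\},\dots,\{B_2,B_1\}$, forming a cycle of length $n+1 \geq 3$ on $\{d,A_1,\dots,A_n\}$ and a path on the disjoint set $\{c,B_n,\dots,B_1\}$; for $n \geq 2$ these are pairwise distinct, so the relation is determined by its $P_n$-pair. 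Hence $v_1$ and $v_2$ carry the same relation $(\alpha\beta,\gamma\delta)$. Finally, since $\alpha \not\equiv \gamma$, exactly one side has $\pi$ as its $P_n$-letter; as $\pi x$ and $\pi w$ are both sides with that $P_n$-letter, they coincide, forcing $x \equiv w$ and therefore $v_1 = v_2$.

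I expect the suffix characterization of the second paragraph to be the crux, in particular the observation that the penultimate letter, being of $P_n$-type, is frozen so that the suffix cannot drift beyond a single relation pair. The role of $n \geq 2$ is then entirely transparent: at $n = 1$ the pair $\{d,A_1\}$ is attached to both $(da,A_1C_1)$ and $(A_1D_1,db)$, so the relation is no longer recoverable from its $P_n$-pair, which is precisely the mechanism behind the failure of single alignment for $\mathcal{M}_1$.
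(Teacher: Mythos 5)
Your proof is correct and follows essentially the same route as the paper's: establish that $v_1,v_2$ are intersection bases via co-determinism and Lemma~\ref{lem:int2}, place $x,y,w,z$ in $Q_n$ via Lemma~\ref{lem:intoint}, deduce that the final two-letter subwords $\pi x,\kappa y$ (resp.\ $\pi w,\kappa z$) form the two sides of a single relation in $\rho_n$, and then use that for $n\geq 2$ a relation is determined by its pair of $P_n$-letters to conclude $x\equiv w$. Your explicit suffix-characterization claim and the cycle/path description of the $P_n$-pairs simply spell out the steps the paper compresses into ``we have that $(p_kx,q_ly),(p_kw,q_lz)\in\rho_n$, up to reordering'' and ``clearly by studying $\rho_n$''.
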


\begin{proof}
    We first show that $v_1$ and $v_2$ are intersection bases. There exist edges $p \xrightarrow{x} v_1$ and $q \xrightarrow{y} v_1$. Since $p \neq q$, we have $x_k \not\equiv y_l$ by Proposition \ref{prop:codet}, and thus $v_1$ must have in-degree at least $2$. Hence $v_1$ is an intersection base by Lemma \ref{lem:int2}. Similar for $v_2$.

     Write $p \equiv p_1\cdots p_k$ and $q \equiv q_1 \dots q_l$ where $p_1,\dots,p_k,q_1,\dots,q_l \in X_n$. By Lemma \ref{lem:intoint}, $x,y,w,z \in Q_n$, and $p_k,q_l \in P_n$. As $p \neq q$, we have $x \not\equiv y$ and $w \not\equiv z$ by cancellativity. In particular, $p_kx \not\equiv q_ly$ and $p_kw \not\equiv q_lz$.
    
    Since $px = qy$ and $pw = qz$, we have that $(p_kx,q_ly),(p_kw,q_lz) \in \rho_n$, up to reordering. Clearly by studying $\rho_n$, we see that we have $p_kx \equiv p_kw$, so $x \equiv w$. In particular, $px = pw$.
\end{proof}

\begin{thm}\label{thm:lcm}
    For any $n \geq 2$, $\Mn$ is singly aligned.
\end{thm}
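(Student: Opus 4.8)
The plan is to reduce everything to the dichotomy already established in Proposition~\ref{prop:cases}. Fix $p, q \in \mathcal{M}_n$; if $p\mathcal{M}_n \cap q\mathcal{M}_n = \varnothing$ there is nothing to prove, so I assume the intersection $I := p\mathcal{M}_n \cap q\mathcal{M}_n$ is non-empty and invoke the proposition. In Case~(1), say $q$ is reachable from $p$, we have $q \in p\mathcal{M}_n$ and hence $q\mathcal{M}_n \subseteq p\mathcal{M}_n$, so $I = q\mathcal{M}_n$ is already principal; this disposes of right-rigid pairs, and also of the degenerate cases $p = q$, $p = 1$ or $q = 1$, each of which lands in Case~(1). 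The entire content is therefore Case~(2), in which I am handed an intersection base $v = px = qy$ with $x, y \in Q_n$ single letters and with no $u \in I$ admitting an edge $u \to v$. Here I additionally record, from the proof of Proposition~\ref{prop:cases}, that being in Case~(2) means there is no path between $p$ and $q$; consequently neither $p$ nor $q$ lies in $I$ (otherwise one would be reachable from the other, returning us to Case~(1)).

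In Case~(2) I would prove $I = v\mathcal{M}_n$ by double inclusion. The inclusion $v\mathcal{M}_n \subseteq I$ is immediate, since $v \in p\mathcal{M}_n \cap q\mathcal{M}_n$ and $I$ is a right ideal, hence closed under reachability. For the reverse inclusion I would exploit that $\Cay(\mathcal{M}_n; X_n)$ is directed acyclic (Proposition~\ref{prop:acyclic}) and that length is well-defined on $\mathcal{M}_n$ and strictly increases along each edge. Given an arbitrary $w \in I$, I trace back along in-edges that remain inside $I$; since length strictly decreases at each step and is bounded below, this terminates at a vertex $v'$ that is minimal in $I$ (no in-edge from within $I$), and by construction $w$ is reachable from $v'$, i.e. $w \in v'\mathcal{M}_n$. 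It then suffices to show $v' = v$.

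The crux is the uniqueness of this minimal vertex, and this is exactly where the hypothesis $n \geq 2$ enters, through Lemma~\ref{lem:uniquev}. Both $v$ and $v'$ are minimal elements of $I$, neither equal to $p$ or $q$, and there is no path between $p$ and $q$; re-running the argument of Proposition~\ref{prop:cases} for $v'$ (which satisfies property~\eqref{eq:star} for the pair $(p,q)$ precisely because it is minimal in $I$) puts it in the single-letter form $v' = px' = qy'$ with $x', y' \in Q_n$, matching the form of $v$. Since $p \neq q$, Lemma~\ref{lem:uniquev} then forces $v = v'$, so $w \in v\mathcal{M}_n$ and the reverse inclusion follows, giving $I = v\mathcal{M}_n$. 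I expect the main obstacle to be the bookkeeping that licenses putting an \emph{arbitrary} minimal element of $I$ into the single-letter form demanded by Lemma~\ref{lem:uniquev}: one must verify that such a $v'$ genuinely satisfies \eqref{eq:star} relative to the original $(p,q)$ and avoids $\{p,q\}$, so that Lemma~\ref{lem:star} applies and collapses both multipliers to single letters. Once that verification is in hand, uniqueness of the minimal element, and hence single-generation of $I$, is forced.
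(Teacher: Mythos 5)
Your proposal is correct and takes essentially the same route as the paper's proof: dispose of Case~(1) of Proposition~\ref{prop:cases} directly, then in Case~(2) trace an arbitrary $w \in p\Mn \cap q\Mn$ back to a minimal element of the intersection, use Lemma~\ref{lem:star} (twice, via the no-path hypotheses) to force that element into the form $px' = qy'$ with $x',y' \in Q_n$ single letters, and invoke Lemma~\ref{lem:uniquev} --- the only place $n \geq 2$ enters --- to identify it with $v$, giving $p\Mn \cap q\Mn = v\Mn$. The ``bookkeeping'' you flag (that a minimal element of the intersection satisfies \eqref{eq:star} and avoids $\{p,q\}$) is exactly how the paper proceeds as well, so there is no gap beyond what the paper itself leaves implicit.
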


\begin{proof}
    Let $p,q \in \Mn$ be such that $p\Mn \cap q\Mn \neq \varnothing$. We appeal to Proposition \ref{prop:cases}. Note that if there exists a path in $\Cay(\Mn;X_{n})$ from $p$ to $q$, then $q\Mn \subseteq p\Mn$ and hence $p\Mn \cap q\Mn = q\Mn$. Similarly, if there exists a path from $q$ to $p$, then $p\Mn \cap q\Mn = p\Mn$.

    Otherwise, $p \neq q$, $p \neq 1$ and $q \neq 1$, and moreover Proposition \ref{prop:cases} ensures that there exists some $v \in \Mn$ with the following properties:
    \begin{enumerate}
        \item $v = px = qy$ for some $x,y \in Q_n$, in particular $v \in p\Mn \cap q\Mn$;
        \item $v$ is an intersection base;
        \item For any $u \in p\Mn \cap q\Mn$, there is no edge $u \rightarrow v$ in $\Cay(\Mn;X_{n})$.
    \end{enumerate}Moreover, Lemma \ref{lem:uniquev} ensures $v$ is the unique vertex with these properties. From property (1), $v\Mn \subseteq p\Mn \cap q\Mn$. We show the reverse inclusion.

    Let $w \in p\Mn \cap q\Mn$. Write $w = prw'$ for some $r,w' \in X_n^*$ such that $pr \in p\Mn \cap q\Mn$ and for any $u \in p\Mn \cap q\Mn$, there is no edge $u \rightarrow pr$ in $\Cay(\Mn;X_{n})$. Note that such a vertex certainly exists, and by definition it satisfies properties (3). We also have $r \neq 1$, otherwise $p \in p\Mn \cap q\Mn$ and there is a path from $q$ to $p$. 
    
    We show that $pr$ satisfies properties (1) and (2). Since $pr \in p\Mn \cap q\Mn$, there exists some $s \in X_n^*$ such that $pr = qs$. Similar to $r$, we have $s \neq 1$ as otherwise $q = pr$ and there is a path from $p$ to $q$. We now apply Lemma \ref{lem:star} twice. It follows that $pr$ is an intersection base, with $|r| = |s| = 1$ and $r,s \in Q_n$.    

    Therefore $pr$ satisfies the properties (1), (2) and (3). By Lemma \ref{lem:uniquev}, we must have $pr = v$. Thus $w = vw' \in v\Mn$ and hence $p\Mn \cap q\Mn = v\Mn$.
\end{proof}
\section{Finitely Aligned Monoids}\label{sec:2ali}

We note that the results from Section \ref{sec:rLCM} only apply in the case where $n \geq 2$. In particular, they do not apply for $\mathcal{M}_{1}$: indeed $da = A_1C_1$ and $db = A_1D_1$ are non-equal intersection bases.
However, we may weaken our results when $n = 1$.

Recall that$$X_{1}:=\{ a, b, c, d, A_{1}, B_{1}, C_{1},D_{1}\},$$
      \[\rho_{1}:= \{ (da,A_{1}C_{1}), (A_{1}D_{1},db),(cb,B_{1}D_{1}) \}\]and consequently $P_1 = \{c,d,A_1,B_1\}$ and $Q_1 = \{a,b,C_1,D_1\}$.

We proceed to weaken Lemma \ref{lem:uniquev} and Theorem \ref{thm:lcm} to Lemma \ref{lem:2vs} and Theorem \ref{thm:2ali} respectively for $\mathcal{M}_1$ -- their proofs follow a similar structure to their Section~\ref{sec:rLCM} counterparts.

\begin{lem}\label{lem:2vs}
    Let $p,q \in \mathcal{M}_1 \setminus \{1\}$ and suppose $v_1 := px = qy$, $v_2:= pw = qz$ and $v_3 := pu = qv$ for some $x,y,w,z,u,v \in X_1$. If $p \neq q$, then $v_1$, $v_2$ and $v_3$ are intersection bases, and either $v_1 = v_3$ or $v_2 = v_3$ or $v_{1}=v_{2}$.
\end{lem}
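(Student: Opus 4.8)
The plan is to follow the skeleton of the proof of Lemma \ref{lem:uniquev}, replacing its final uniqueness step by a pigeonhole argument adapted to the richer relation structure at the letter $d$ (equivalently $A_1$) that is peculiar to $n=1$.

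First I would establish that $v_1, v_2, v_3$ are intersection bases exactly as in Lemma \ref{lem:uniquev}: for each $i$, the two edges $p \to v_i$ and $q \to v_i$ are distinct, and their labels differ by Proposition \ref{prop:codet} since $p \neq q$, so $v_i$ has in-degree at least $2$ and Lemma \ref{lem:int2} applies. Writing $p \equiv p_1 \cdots p_k$ and $q \equiv q_1 \cdots q_l$, Lemma \ref{lem:intoint} gives $x,y,w,z,u,v \in Q_1$, while the length- and position-preservation of $P_1/Q_1$-letters forces $p_k, q_l \in P_1$. Since $p \neq q$, cancellativity gives $x \not\equiv y$, $w \not\equiv z$ and $u \not\equiv v$, so within each pair the two length-two final subwords differ only in their last letter. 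Arguing as in the converse direction of Lemma \ref{lem:int2}, each pair must then be realised by a relation acting on the final two letters; that is, $(p_k x, q_l y)$, $(p_k w, q_l z)$ and $(p_k u, q_l v)$ all lie in $\rho_1$, up to reordering.

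The combinatorial heart is then a short inspection of $\rho_1$. For each first letter in $P_1$ I would tabulate which two-letter relation words begin with it and which first letters they are paired with: $d$ heads $da$ (paired with the $A_1$-word $A_1C_1$) and $db$ (paired with the $A_1$-word $A_1D_1$); $A_1$ heads $A_1C_1$ and $A_1D_1$ (both paired with $d$-words); $c$ heads only $cb$ (paired with $B_1D_1$); and $B_1$ heads only $B_1D_1$ (paired with $cb$). In particular no relation has both sides beginning with the same letter, so $p_k \neq q_l$, and the only unordered pairs $\{p_k,q_l\}$ admitting any relation are $\{c,B_1\}$ and $\{d,A_1\}$. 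This is precisely where $n=1$ diverges from $n\ge 2$: for $n \geq 2$ the letter $d$ pairs $da$ with an $A_1$-word but $db$ with an $A_n$-word, so $q_l$ pins down $p_k x$ uniquely, whereas here $A_1 = A_n$ and the single neighbour letter $A_1$ is compatible with both $da$ and $db$.

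Finally I would split into the two surviving cases. If $\{p_k,q_l\}=\{c,B_1\}$, exactly one two-letter word begins with $p_k$ and participates in a relation with a $q_l$-word, so $p_k x \equiv p_k w \equiv p_k u$; cancelling the common prefix $p_1\cdots p_{k-1}$ gives $v_1=v_2=v_3$. If $\{p_k,q_l\}=\{d,A_1\}$, there are exactly two admissible words beginning with $p_k$, so the three words $p_k x, p_k w, p_k u$ lie in a two-element set and by pigeonhole at least two coincide; cancelling the prefix then yields the corresponding equality among $v_1,v_2,v_3$, as required. The only real obstacle is isolating this finite case analysis cleanly — once the admissible relation words are organised by first letter, the pigeonhole step is immediate.
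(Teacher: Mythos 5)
Your proof is correct and takes essentially the same approach as the paper's: establish that each $v_i$ is an intersection base via Proposition~\ref{prop:codet} and Lemmas~\ref{lem:int2} and~\ref{lem:intoint}, reduce to three relations $(p_kx,q_ly),(p_kw,q_lz),(p_ku,q_lv)\in\rho_1$, and conclude that two of the three left-hand words coincide. The only difference is that you spell out, via the case split on $\{p_k,q_l\}\in\bigl\{\{c,B_1\},\{d,A_1\}\bigr\}$ and a pigeonhole count, the step the paper compresses into ``by studying $\rho_1$.''
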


\begin{proof}
    We first show that $v_1$, $v_2$ and $v_3$ are intersection bases. There exist edges $p \xrightarrow{x} v_1$ and $q \xrightarrow{y} v_1$. Since $p \neq q$, we have $x_k \not\equiv y_l$ by Proposition \ref{prop:codet}, and thus $v_1$ must have in-degree at least $2$. Hence $v_1$ is an intersection base by Lemma \ref{lem:int2}. Similar for $v_2$ and $v_3$.

     Write $p \equiv p_1\cdots p_k$ and $q \equiv q_1 \dots q_l$ where $p_1,\dots,p_k,q_1,\dots,q_l \in X_1$. By Lemma \ref{lem:intoint}, $x,y,w,z,u,v \in Q_1$, and $p_k,q_l \in P_1$. As $p \neq q$, we have $x \not\equiv y$, $w \not\equiv z$ and $u \not\equiv v$ by cancellativity. In particular, $p_kx \not\equiv q_ly$, $p_kw \not\equiv q_lz$ and $p_ku \not\equiv q_lv$.
    
    Since $px = qy$, $pw = qz$ and $pu = qv$, we have that \[(p_kx,q_ly),(p_kw,q_lz),(p_ku,q_lv) \in \rho_1\]up to reordering. By studying $\rho_1$, we see that we must have either $p_ku \equiv p_kw$ or $p_kx \equiv p_kw$ or $p_kx\equiv p_ku$. In particular, either $pu = pw$ or $px = pw$ or $px = pu$.
\end{proof}

\begin{thm}\label{thm:2ali}
    The monoid $\mathcal{M}_1$ is $2$-aligned, but not singly aligned.
\end{thm}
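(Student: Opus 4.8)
The plan is to establish the two assertions separately, in each case reusing the argument of Theorem~\ref{thm:lcm} but substituting the newly proved Lemma~\ref{lem:2vs} for the uniqueness Lemma~\ref{lem:uniquev}. For the failure of single alignment I would use the witnesses $p = d$ and $q = A_1$ flagged just before the statement. As $d$ and $A_1$ are distinct generators, there is no path between $p$ and $q$ in $\Cay(\mathcal{M}_1;X_1)$, so we are in Case~(2) of Proposition~\ref{prop:cases}. The relations $da = A_1C_1$ and $A_1D_1 = db$ provide two elements $v_1 := da$ and $v_2 := A_1D_1$ of $d\mathcal{M}_1 \cap A_1\mathcal{M}_1$, each an intersection base (their left normal forms $da, A_1D_1$ lie in $L_1$) and distinct since distinct left normal forms represent distinct elements. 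I would then verify that $v_1$ and $v_2$ are both \emph{minimal} in the intersection: by co-determinism (Proposition~\ref{prop:codet}) and Lemma~\ref{lem:intoint}, the only predecessors of $da$ are $d$ and $A_1$, and likewise the only predecessors of $A_1D_1$ are $A_1$ and $d$; but neither $d \in A_1\mathcal{M}_1$ nor $A_1 \in d\mathcal{M}_1$, for length reasons, so no predecessor lies in the intersection. Finally I would note that any principal right ideal $r\mathcal{M}_1$ has $r$ as its \emph{unique} minimal element, since $\Cay(\mathcal{M}_1;X_1)$ is directed acyclic (Proposition~\ref{prop:acyclic}) and every $s \in r\mathcal{M}_1 \setminus \{r\}$ is the target of an edge from another point of $r\mathcal{M}_1$. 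Two distinct minimal elements thus preclude $d\mathcal{M}_1 \cap A_1\mathcal{M}_1 = r\mathcal{M}_1$, so $\mathcal{M}_1$ is not singly aligned.

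For $2$-alignedness I would follow the proof of Theorem~\ref{thm:lcm} verbatim up to the point where uniqueness is invoked. Given $p, q$ with $p\mathcal{M}_1 \cap q\mathcal{M}_1 \neq \varnothing$, Proposition~\ref{prop:cases} splits into the reachable case (where the intersection is a single principal ideal, hence generated by one element) and the case with no path between $p$ and $q$. In the latter, every $w$ in the intersection may be written $w = prw'$ where $pr$ is a minimal element of the intersection obtained by tracing edges backwards; exactly as in Theorem~\ref{thm:lcm}, applying Lemma~\ref{lem:star} twice shows that $pr = qs$ is an intersection base with the connecting letters $r, s \in Q_1$ of length one. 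The essential difference is that Lemma~\ref{lem:2vs}, instead of forcing a single value, only guarantees that among any three such minimal bases two must coincide; hence there are at most two distinct ones, say $v_1$ and $v_2$. Every minimal base therefore equals $v_1$ or $v_2$, giving $w \in v_1\mathcal{M}_1 \cup v_2\mathcal{M}_1$ and so $p\mathcal{M}_1 \cap q\mathcal{M}_1 \subseteq v_1\mathcal{M}_1 \cup v_2\mathcal{M}_1$; the reverse inclusion is immediate as $v_1, v_2$ lie in the intersection. Thus the intersection is generated by at most two elements and $\mathcal{M}_1$ is $2$-aligned.

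The main obstacle I anticipate is in the $2$-aligned half: I must be sure that \emph{every} minimal element of $p\mathcal{M}_1 \cap q\mathcal{M}_1$ is one of the at-most-two values delivered by Lemma~\ref{lem:2vs}, i.e.\ that the trace-back procedure always terminates at a genuine single-letter intersection base $pr = qs$ to which Lemma~\ref{lem:2vs} applies, with no minimal element of a different shape slipping through. For $n \geq 2$ this bookkeeping was absorbed cleanly by the uniqueness Lemma~\ref{lem:uniquev}; here I must instead argue that the set of minimal bases has cardinality at most two and that these two elements genuinely generate the whole intersection, which is the crux of upgrading $1$-generation to $2$-generation.
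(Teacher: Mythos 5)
Your proposal is correct and follows essentially the same route as the paper: the $2$-aligned half repeats the proof of Theorem~\ref{thm:lcm} verbatim with Lemma~\ref{lem:2vs} substituted for Lemma~\ref{lem:uniquev} to bound the number of minimal intersection bases by two, exactly as the paper does. For the failure of single alignment the paper merely asserts that $d\mathcal{M}_1 \cap A_1\mathcal{M}_1$ (generated by $da$ and $db$) is not principal, and your argument via the two distinct minimal elements $da$ and $A_1D_1$, together with the observation that a principal right ideal in the directed acyclic Cayley graph has a unique minimal element, correctly fills in that detail.
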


\begin{proof}
    Let $p,q \in \mathcal{M}_1$ be such that $p\mathcal{M}_1 \cap q\mathcal{M}_1 \neq \varnothing$. We appeal to Proposition \ref{prop:cases}. If there exists a path in $\Cay(\mathcal{M}_1;X_{1})$ from $p$ to $q$, then $q\mathcal{M}_1 \subseteq p\mathcal{M}_1$ and so we have that $p\mathcal{M}_1 \cap q\mathcal{M}_1 = q\mathcal{M}_1$. If there exists a path from $q$ to $p$, then similarly $p\mathcal{M}_1 \cap q\mathcal{M}_1 = p\mathcal{M}_1$.

    Otherwise, $p \neq q$, $p \neq 1$ and $q \neq 1$, and moreover Proposition \ref{prop:cases} ensures that there exists some $v \in \mathcal{M}_{1}$ with the following properties:
    \begin{enumerate}
        \item $v = px = qy$ for some $x,y \in Q_1$, in particular $v \in p\mathcal{M}_1 \cap q\mathcal{M}_1$;
        \item $v$ is an intersection base;
        \item For any $u \in p\mathcal{M}_1 \cap q\mathcal{M}_1$, there is no edge $u \rightarrow v$ in $\Cay(\mathcal{M}_1;X_{1})$.
    \end{enumerate}Since $v$ satisfies properties (1) and (2), Lemma \ref{lem:2vs} ensures there are at most two choices for $v$. If there is only one such choice, an identical argument to Theorem \ref{thm:lcm} gives us that $p\mathcal{M}_1 \cap q\mathcal{M}_1 = v\mathcal{M}_1$.
    
    Suppose now there are two distinct choices for $v$; label these $v_1$ and $v_2$. From property (1), $\{v_1,v_2\}\mathcal{M}_1 \subseteq p\mathcal{M}_1 \cap q\mathcal{M}_1$. We show the reverse inclusion -- let $w \in p\mathcal{M}_1 \cap q\mathcal{M}_1$.

    Write $w = prw'$ for some $r,w' \in X_1^*$ such that $pr \in p\mathcal{M}_1 \cap q\mathcal{M}_1$ and for every $u \in p\mathcal{M}_1 \cap q\mathcal{M}_1$, there is no edge $u \rightarrow pr$ in $\Cay(\mathcal{M}_1;X_{1})$. Via an identical argument to that in Theorem \ref{thm:lcm}, $pr$ satisfies the properties (1), (2) and (3). By Lemma \ref{lem:2vs}, we must have $pr \in \{v_1,v_2\}$. Hence $w \in \{v_1,v_2\}\mathcal{M}_1$ and so $p\mathcal{M}_1 \cap q\mathcal{M}_1 = \{v_1,v_2\}\mathcal{M}_1$.

    Finally, we note that $\mathcal{M}_1$ is not singly aligned. Indeed, one sees that the ideal \[A_1\mathcal{M}_1 \cap d\mathcal{M}_1 = \{AB,AC\}\mathcal{M}_1\]is not principally generated.
\end{proof}



\section*{Acknowledgements}
The authors would like to thank Chris Bruce for their communications on the subject of this paper, and Mark Kambites and N\'ora Szak\'acs for their draft comments and support in accessing reference materials.


\begin{thebibliography}{10}

\bibitem{brix:coactions}
K.~A.~Brix, C.~Bruce, and A.~Dor-On.
\newblock Normal coactions extend to the {C}*-envelope, 2024.
\newblock \href{https://arxiv.org/abs/2309.04817}{arXiv:2309.04817}.

\bibitem{bush:MandL}
G.~C.~Bush.
\newblock The embedding theorems of {M}alcev and {L}ambek.
\newblock {\em Canadian J. Math.}, 15:49--58, 1963.

\bibitem{bush:MandL2}
G.~C.~Bush.
\newblock The embeddability of a semigroup -- conditions common to {M}al`cev and {L}ambek.
\newblock {\em Trans. American Math. Soc.}, 157:437--448, 1971.

\bibitem{carson:howson}
S.~Carson and V.~Gould.
\newblock Right ideal {H}owson semigroups.
\newblock {\em Semigroup Forum}, 102:62--85, 2021.

\bibitem{caucal:cayley}
D.~Caucal.
\newblock {Cayley graphs of basic algebraic structures}.
\newblock {\em {Discrete Math. and Theoretical Comp. Sci.}}, 21(1), 2020.

\bibitem{clifford:vol1}
A.~H.~Clifford and G.~B.~Preston.
\newblock {\em The algebraic theory of semigroups. {V}ol. {I}}.
\newblock Mathematical Surveys, No. 7. American Math. Soc., Providence, RI, 1961.

\bibitem{clifford:vol2}
A.~H.~Clifford and G.~B.~Preston.
\newblock {\em The algebraic theory of semigroups. {V}ol. {II}}.
\newblock Mathematical Surveys, No. 7. American Math. Soc., Providence, RI, 1967.

\bibitem{cohn:free}
P.~M.~Cohn.
\newblock {\em Free rings and their relations}.
\newblock Second Edition, Academic Press, London, 1985.

\bibitem{dehornoy:mr3}
P.~Dehornoy and F.~Wehrung.
\newblock Multifraction reduction {III}: The case of interval monoids.
\newblock {\em J. Comb. Alg.}, 1(4):341--370, 2017.

\bibitem{doss:immersion}
R.~Doss.
\newblock Sur l'immersion d'un semigroupe dans un groupe.
\newblock {\em Bull. Sci. Math.}, 72:139--150, 1948.
\newblock In French.

\bibitem{exel:inv}
R.~Exel and B.~Steinberg.
\newblock The inverse hull of 0-left cancellative semigroups, 2017.
\newblock \href{https://arxiv.org/abs/1710.04722}{arXiv:1710.04722}.

\bibitem{hollings:embedding}
C.~Hollings.
\newblock Embedding semigroups in groups: not as simple as it might seem.
\newblock {\em Arch. Hist. Exact Sci.}, 68(5):641--692, 2014.

\bibitem{howie:fundamentals}
J.~M.~Howie.
\newblock {\em Fundamentals of semigroup theory}, volume~12 of {\em London Mathematical Society Monographs. New Series}.
\newblock The Clarendon Press, Oxford University Press, NY, 1995.

\bibitem{johnstone:cats}
P.~T.~Johnstone.
\newblock On embedding categories in groupoids.
\newblock {\em Math. Proc. Cambridge Phil. Soc.}, 145:273--294, 2008.

\bibitem{lawson:noncomm}
M.~V.~Lawson.
\newblock Non-commutative {S}tone duality: Inverse semigroups, topological groupoids and {C}*-algebras.
\newblock {\em Int. J. Algebra Comput.}, 22, 2011.

\bibitem{lawson:general}
M.~V.~Lawson and A.~Vdovina.
\newblock Generalizations of free monoids.
\newblock {\em Semigroup Forum}, 109:167--185, 2024.

\bibitem{malcev:original}
A.~Malcev.
\newblock On the immersion of an algebraic ring into a field.
\newblock {\em Mathematische Annalen}, 113(1):686--691, 1937.

\bibitem{malcev:inf1}
A.~Malcev.
\newblock On the immersion of associative systems in groups.
\newblock {\em Matematicheskii sbornik}, 6:331--336, 1939.
\newblock In Russian.

\bibitem{malcev:inf2}
A.~Malcev.
\newblock On the immersion of associative systems in groups {II}.
\newblock {\em Matematicheskii sbornik}, 8:251--264, 1940.
\newblock In Russian.

\bibitem{nekrashevych:self}
V.~Nekrashevych.
\newblock {C}*-algebras and self-similar groups.
\newblock {\em J. Reine Angew. Math.}, 2009(630):59--123, 2009.

\bibitem{paterson:groupoids}
A.~L.~T.~Paterson.
\newblock {\em Groupoids, inverse semigroups, and their operator algebras}, volume 170 of {\em Progress in Mathematics}.
\newblock Birkh\"{a}user Boston, Inc., Boston, MA, 1999.

\bibitem{ptak:immerse}
V.~Pt\'ak.
\newblock Immersibility of semigroups.
\newblock {\em \v{C}eskoslovensk\'{a} Akademie v\v{e}d. \v{C}asopis pro p\v{e}stov\'{a}n\'{i} matematiky}, 78:259--261, 1953.
\newblock In Czech.

\bibitem{wehrung:gcd}
F.~Wehrung.
\newblock Gcd-monoids arising from homotopy groupoids.
\newblock {\em Semigroup forum}, 97:493--522, 2018.

\end{thebibliography}
\end{document}